\documentclass[11pt]{amsart}





\usepackage{amsmath, amsfonts, amssymb,amsthm}
\usepackage{amstext}
\usepackage{mathrsfs}

\usepackage{float,epsf,subfigure}
\usepackage[all,cmtip]{xy}
\usepackage{hyperref}
\usepackage{algorithm}
\usepackage{algorithmic}
\usepackage{mathtools}
\usepackage{float}
\usepackage{bm}
\setlength{\parskip}{.1 in plus 2pt minus 2pt}
\setlength{\textheight}{7.8 in}

\theoremstyle{plain}
\newtheorem{theorem}{Theorem}[section]

\newtheorem{cor}[theorem]{Corollary}
\newtheorem{def-thm}[theorem]{Definition-Theorem}
\newtheorem{lemma}[theorem]{Lemma}

\newtheorem*{tha}{Theorem A}
\newtheorem*{thb}{Theorem B}

\newtheorem*{thai}{Theorem I}
\newtheorem*{thaii}{Theorem II}
\newtheorem*{thaiii}{Theorem III}

\theoremstyle{definition}

\def\CC{\mathbb C}

{

\begin{document}
\title[Nevanlinna theory]{The Second Main Theorem for  spherically  symmetric K\"ahler manifolds}
\author[X.-J. Dong \& P.-C Hu] {Xianjing Dong \& Peichu Hu}

\address{School of Mathematics \\ China University of Mining and Technology \\  Jiangsu, Xuzhou, 221116, P. R. China}
\email{xjdong05@126.com}
\address{Department of Mathematics \\ Shandong University \\ Jinan \\ 250100 \\ Shandong \\ P. R. China}
\email{pchu@sdu.edu.cn}


\subjclass[2010]{32H30, 30D35.} 
\keywords{Nevanlinna theory;  Second Main Theorem; Holomorphic map; Defect relation;  Spherically  symmetric manifolds.}
\date{}
\maketitle \thispagestyle{empty} \setcounter{page}{1}

\begin{abstract}  
We investigate the value distribution of  holomorphic maps defined on one class of  K\"ahler manifolds.  With the very  natural settings, we establish a Second Main Theorem which is of the similar form as  ones of the classical Second Main Theorem for complex Euclidean spaces and complex  unit balls. 
\end{abstract}

\vskip\baselineskip

\setlength\arraycolsep{2pt}

\section{Introduction}

\subsection{Motivation}~

 Let $f$ be a nonconstant meromorphic function  on $\mathbb C^m$ or $\mathbb B^m$ and $a_1, \cdots, a_q$ be distinct points in $\overline\CC,$  where 
 $\mathbb B^m$ is the unit  ball (with  standard Euclidean metric) in $\mathbb C^m.$   We have the familiar  
 notations in Nevanlinna theory (see \cite{Nev}) such as  characteristic function $T_f(r),$ counting function $N_f(r, a),$   proximity function $m_f(r, a)$ and simple counting function $\overline N_f(r, a),$ etc., 
see  Noguchi \cite{noguchi}  and Ru \cite{ru}. 
We  recall the  Second Main Theorem in Nevanlinna theory for $\mathbb C^m$ and $\mathbb B^m$ as follows: 
 
$(i)$  \emph{In the case of $\mathbb C^m${\rm{:}} for every $\delta>0,$   we have  
$$(q-2)T_f(r)\leq \sum_{j=1}^q \overline N_f(r, a_j)+O\big(\log^+T_f(r)+\delta\log^+r\big)$$
holds for all $r\in(0,\infty)$ outside a set $E_\delta$ of finite Lebesgue measure.}

$(ii)$  \emph{In the case of $\mathbb B^m${\rm{:}} for every $\delta>0,$   we have  
$$(q-2)T_f(r)\leq \sum_{j=1}^q \overline N_f(r, a_j)+O(\log^+T_f(r)+\log\frac{1}{1-r})$$
holds for all $r\in(0,1)$ outside a set $E_\delta$ with $\int_{E_{\delta}}(1-r)^{-1}dr<\infty.$}

Note that  $T_f(r)$ is bounded from below by $O(\log r)$ as $r\rightarrow \infty.$ Hence, 
the result $(i)$ concludes the Little Picard Theorem  asserting that a nonconstant meromorphic function can omit at most two  points. 
This  result was extended  to the case of complex projective manifolds  by Carlson-Griffiths-King \cite{gri,gri1} under the dimension condition that 
the dimension of  target manifolds is not greater than $m.$  For the  $\mathbb B^m$-case, the Little Picard Theorem no longer holds, 
but one will find from   $(ii)$ that $f$ can omit at  most two points if $T_f(r)$ grows rapidly enough.

In 2010, by utilizing a technique of Brownian motion initialized by Carne \cite{carne},   Atsuji \cite {atsuji} established a Second Main Theorem of meromorphic functions on  a non-positively curved complete K\"ahler manifold. His theorem extends  the classical Nevanlinna theory for  $\CC^m$   (see $(i)$ in above), which says that 
\begin{tha}[Atsuji, \cite{atsuji}]   Let $M$ be  a complete K\"ahler manifold 
of non-positive sectional curvature and  $a_1,\cdots, a_q$ be distinct points in $\overline \CC.$
Let $f$ be a nonconstant meromorphic function on $M$. Then for every $\delta>0,$
  \begin{eqnarray*}
      (q-2)T_f(r)
     &\leq& \sum_{j=1}^q\overline{N}_f(r, a_j)+N(r, {\rm Ric}) +O\Big{(}\log^+ T_{f}(r)  +\log^+\big{(}G(r)\Phi(r)\big)\Big{)}
  \end{eqnarray*}
holds for all $r\in(0,\infty)$ outside a set $E_\delta$ of finite Lebesgue measure. 
\end{tha}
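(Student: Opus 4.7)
The plan is to adapt the Brownian motion approach initiated by Carne and pushed further by Atsuji, replacing the classical integration over spheres with stochastic integration along Brownian sample paths. Let $X_t$ be a Brownian motion on $M$ with generator $\tfrac12\Delta_M$, starting from a fixed base point $o$, and let $\tau_r$ denote the first exit time of $X_t$ from the geodesic ball $B_o(r)$. The first step is to recast the Nevanlinna functionals in probabilistic form: by applying It\^o's formula to the pullback $\log(1+|f|^2)$ of the Fubini--Study potential, one obtains a stochastic First Main Theorem expressing $T_f(r)$ and $N_f(r,a)$ as expectations involving $f(X_{\tau_r})$ and local-time-like integrals along the path. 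Non-positive sectional curvature enters here by guaranteeing non-explosion of $X_t$ and allowing comparison between the intrinsic Laplacian of the radial function and its Euclidean model, so that the exit-time distribution is controlled by the geometric quantities $G(r)$ (a Green-type growth function) and $\Phi(r)$ (a boundary measure factor).

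The heart of the proof is a \emph{logarithmic derivative lemma of stochastic type}. One selects the standard Nevanlinna auxiliary function
\[
\Psi = \sum_{j=1}^{q}\log\frac{1}{\|f,a_j\|^2\bigl(\log\|f,a_j\|^{-2}\bigr)^{2}}
\]
(or a comparable Ahlfors--Shimizu weight), and applies It\^o's formula together with the concavity of $\log$ and Jensen's inequality to bound $E[\Psi(f(X_{\tau_r}))]$ by a multiple of $\log^+T_f(r)$ plus the logarithm of the geometric error $G(r)\Phi(r)$. Combined with the classical combinatorial inequality relating $\sum_j \log^+ 1/\|f,a_j\|$ to the proximity functions $m_f(r,a_j)$, this yields
\[
\sum_{j=1}^{q} m_f(r,a_j) \le 2T_f(r) + O\!\left(\log^+T_f(r)+\log^+\!\bigl(G(r)\Phi(r)\bigr)\right)
\]
outside a suitable exceptional set.

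The ramification term $N(r,\mathrm{Ric})$ is produced by integrating the Ricci form along $X_t$, which appears as a correction in It\^o's formula when one compares the hermitian metric on $T_M$ with the pulled-back metric from $\overline{\mathbb{C}}$; the jumps between divisor counting and the intrinsic Laplacian contribute exactly the Ricci counting function. Finally, Jensen's formula together with the First Main Theorem allows one to substitute $\sum_j m_f(r,a_j)=qT_f(r)-\sum_j N_f(r,a_j)+O(1)$ and, after replacing $N_f$ by $\overline N_f$ using the ramification term, obtain the stated inequality. The exceptional set $E_\delta$ is produced in the standard way from a Borel--Nevanlinna calculus lemma applied to the increasing function $T_f$.

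The main obstacle is the logarithmic derivative estimate: in contrast with the Euclidean setting, one cannot integrate by parts on spheres, and the bound on $E[\Psi(f(X_{\tau_r}))]$ must be extracted purely from the martingale structure of $X_t$ and a sharp comparison between $\Delta_M$ and the Euclidean Laplacian on $B_o(r)$. Controlling the dependence of the error on $G(r)$ and $\Phi(r)$ with explicit logarithmic growth, rather than on ad hoc geometric quantities, is the delicate point, and it is precisely where the assumption of non-positive sectional curvature is indispensable via the Hessian and Laplacian comparison theorems.
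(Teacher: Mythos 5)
The paper does not actually prove Theorem A; it is quoted as background from Atsuji's article \cite{atsuji}. There is therefore no in-text proof to compare against. That said, the paper's own methodology for its Theorem \ref{main theorem} is relevant, and the comparison is instructive.

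Your sketch is a fair outline of Atsuji's probabilistic argument (Brownian motion $X_t$ with generator $\tfrac12\Delta_M$, exit times $\tau_r$, It\^o's formula for $\log(1+|f|^2)$, a stochastic logarithmic derivative lemma, Laplacian comparison from non-positive curvature, exceptional set from a Borel-type calculus lemma). The present paper, by contrast, restricts to spherically symmetric K\"ahler manifolds precisely so that the harmonic measure and Green function of $B_o(r)$ have the closed-form expressions of Lemma \ref{asdf}; Dynkin's formula then degenerates to a deterministic Green--Jensen-type identity, and the entire stochastic apparatus (martingale estimates, exit-time tail bounds, the ad hoc functions $G$ and $\Phi$) disappears. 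The payoff is that the error term collapses to $O(\log^+T_f + \delta\log^+\sigma(r))$ or $O(\log^+T_f + \log\tfrac1{R-r})$, recovering the shape of the classical estimates, at the price of a strong symmetry hypothesis on the domain.

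One substantive slip in your sketch: the key proximity bound should read, in the rational form you chose,
\[
\sum_{j=1}^{q} m_f(r,a_j) + N_{\mathrm{ram}}(r) \;\le\; 2T_f(r) + N(r,\mathrm{Ric}) + O\!\left(\log^+T_f(r)+\log^+\!\bigl(G(r)\Phi(r)\bigr)\right),
\]
that is, the ramification counting function $N_{\mathrm{ram}}(r)$ must appear on the left at this stage. Without it, your final step of replacing $N_f(r,a_j)$ by the truncated $\overline N_f(r,a_j)$ has nothing to pay for the dropped multiplicities, and the inequality you would obtain would involve $N_f$ rather than $\overline N_f$. Also, the source of $N(r,\mathrm{Ric})$ is not an It\^o correction term per se: it enters because the Carlson--Griffiths singular volume form construction on the target, when pulled back and compared with the domain volume $\alpha^m$, picks up the factor $\det(h_{i\bar\jmath})$ and hence $dd^c\log\det(h_{i\bar\jmath})=-\mathscr R_M$; the Dynkin/It\^o step merely converts that $(1,1)$-current into the counting function. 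Your phrasing about ``jumps between divisor counting and the intrinsic Laplacian'' is too vague to be a proof step.
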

 In  Theorem A, the error terms such as $\log^+(G(r)\Phi(r))$ and $N(r, {\rm{Ric}})$ are involved, where $G$ is the solution of a certain second order ODE depending on the Green functions for geodesic balls in $M$ and $\Phi$ is expressed by $G,$ 
 and the curvature term $N(r, {\rm{Ric}})$ is   determined  by the Ricci curvature of $M$.  
 
Recently, the first named author \cite{dong}  investigated  Carlson-Griffiths theory \cite{gri} for complete K\"ahler manifolds by using the similar  probabilistic method. The  author    
 generalized Theorem A  by the following

\begin{thb}[Dong, \cite{dong}]  Let $M$ be  a complete K\"ahler manifold 
of non-positive sectional curvature and $V$ be a complex projective manifold satisfying that $\dim M\geq \dim V.$
  Let   $D\in|L|$  be a divisor of  simple normal crossing type, where $L$ is a holomorphic line bundle over $V$. Fix a Hermitian metric $\omega$ on $V.$ Let $f:M\rightarrow V$ be a differentiably non-degenerate meromorphic mapping.  Then for any $\delta>0,$
  \begin{eqnarray*}
      T_{f}(r, L)+T_{f}(r, K_V) 
     &\leq& \overline{N}_{f}(r, D)+O\Big{(}\log^+ T_{f}(r, \omega)-\kappa(r)r^2+\delta\log r\Big{)}
  \end{eqnarray*}
holds for all $r\in(0,\infty)$ outside a set $E_\delta$ of finite Lebesgue measure. 
\end{thb}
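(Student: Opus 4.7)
The plan is to transplant the Carlson--Griffiths--King scheme from $\mathbb{C}^m$ to the K\"ahler source $M$, using the probabilistic version of the Logarithmic Derivative Lemma that underlies Theorem~A. First I would fix Hermitian metrics $h_L$ on $L$ and $h_{K_V}$ on $K_V$, decompose $D=D_1+\cdots+D_q$ (after an auxiliary decomposition of $L$) with defining sections $s_j$, and introduce the Carlson--Griffiths singular volume form
\[
\Psi \;=\; \frac{\Omega}{\prod_{j=1}^q \|s_j\|^2 \log^2 \|s_j\|^2}
\]
on $V$, where $\Omega$ is a smooth volume form. A standard calculation shows that $\mathrm{Ric}(\Psi)$ represents $c_1(L)+c_1(K_V)$ up to a divisorial current along $D$ plus a positive logarithmic correction. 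With this choice, the First Main Theorem reduces the desired inequality to a uniform upper estimate on spherical integrals of $\log f^{*}\Psi$ over the geodesic sphere $\partial B(r)\subset M$.

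Following Atsuji's probabilistic framework, I would then apply the Green--Jensen formula on geodesic balls $B(r)$, which expresses a sphere integral as a Brownian expectation of the form
\[
\int_{\partial B(r)} u\, d\mu_r \;-\; u(o) \;=\; \tfrac{1}{2}\,\mathbb{E}^{o}\!\Bigl[\int_0^{\tau_r}\Delta u(X_s)\,ds\Bigr]
\]
for suitable $u$, with $\tau_r$ the first exit time from $B(r)$. Applied to $u=\log(f^{*}\Psi/v_M)$, where $v_M$ is the Riemannian volume form of $M$, the Laplacian term yields $T_f(r,L)+T_f(r,K_V)-\overline{N}_f(r,D)$ after an integration by parts and use of Poincar\'e--Lelong for the singular factor; here the assumption $\dim M\geq \dim V$ together with differential non-degeneracy of $f$ is what makes the quotient $f^{*}\Psi/v_M$ a sensible positive function. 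The boundary integral is then estimated by Atsuji's Calculus Lemma:
\[
\int_{\partial B(r)} \log^{+}\!\bigl(f^{*}\Psi/v_M\bigr)\, d\mu_r \;=\; O\bigl(\log^{+}T_f(r,\omega) - \kappa(r)r^2 + \delta\log r\bigr)
\]
outside a set $E_\delta$ of finite Lebesgue measure. The $-\kappa(r)r^2$ deficit enters through the Hessian/volume comparison theorem under the non-positive sectional curvature hypothesis, which gives a sharp upper bound on the Green function of $B(r)$ in terms of its Euclidean model, and the $\delta\log r$ term arises from a standard Borel exceptional-set iteration.

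The main obstacle is the Calculus Lemma in this generality: one must control the expectation of $\log^{+}$ of a quotient of two degenerate volume forms purely in terms of $T_f(r,\omega)$, while simultaneously absorbing the log-poles of $\Psi$ along $D$ and the Ricci contribution of the target (which is what produces the $K_V$ term on the left). This requires two-sided Green function estimates from comparison geometry under non-positive sectional curvature, a concavity argument applied inside the Brownian expectation, and the Borel lemma to produce the exceptional set with finite Lebesgue measure. Once the Calculus Lemma is in place, combining it with the Jensen-type identity of the previous paragraph immediately assembles $T_f(r,L)+T_f(r,K_V)$ on the left and $\overline{N}_f(r,D)$ on the right, yielding the asserted inequality.
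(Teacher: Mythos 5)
This paper only quotes Theorem~B from \cite{dong} and does not reprove it; but its proof of Theorem~\ref{main theorem} uses precisely the scheme you outline (Carlson--Griffiths singular volume form, Dynkin/Green--Jensen formula for Brownian exit expectations, Poincar\'e--Lelong, a Logarithmic Derivative Lemma, a Calculus Lemma feeding into a Borel exceptional-set argument), so your framework matches and is the right one. Two points need correcting. First, for $m=\dim M>n=\dim V$ the ratio $f^{*}\Psi/v_M$ is not well defined: $f^{*}\Psi$ is an $(n,n)$-form on $M$ while $v_M$ is an $(m,m)$-form. One must wedge with $\alpha^{m-n}$ and work with $\xi$ defined by $\xi\alpha^{m}=f^{*}\Psi\wedge\alpha^{m-n}$, exactly as in the proof of Theorem~\ref{main theorem}; it is this wedge, not the dimension hypothesis alone, that lets $\log\xi$ be bounded by a sum of $\log^{+}\bigl(\|\nabla_M f_{\lambda k}\|^2/|f_{\lambda k}|^2\bigr)$ to which the Logarithmic Derivative Lemma applies, and the differentiable non-degeneracy is what guarantees $\xi\not\equiv 0$. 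Second, the term $-\kappa(r)r^2$ does not fall out of the Calculus Lemma or of a Green-function comparison applied to the boundary integral; in Dong's argument it arises from a separate treatment of the Ricci contribution. Concretely, the analogue of the lower bound~(\ref{5q}) here reads $T(r,dd^c\log\xi)\geq T_{f}(r,L)+T_{f}(r,K_V)+T(r,\mathscr R_{M})-\overline{N}_{f}(r,D)+O(1)$, and after moving $T(r,\mathscr R_{M})$ to the right one bounds $-T(r,\mathscr R_{M})$ --- the integral of the Green function against the scalar curvature --- via Laplacian comparison under nonpositive sectional curvature, yielding $-\kappa(r)r^2+O(1)$. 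Your sketch conflates the Green-function estimates that go into the Calculus Lemma with this Ricci-integral comparison; they are distinct steps, and only the latter produces the curvature deficit in Theorem~B.
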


In Theorem B, the term $\kappa(r)$ is the minimal of the pointwise lower bound of the Ricci curvature for  geodesic balls in $M.$
Note from the above theorem, if one needs  to  receive a defect relation in Nevanlinna theory, then $T_f(r)$ must grow rapidly enough. 
 Both Theorem A and Theorem B cannot  conclude  $(ii),$ 
because  these estimate terms  
  are rough.  
   In order to establish a Second Main Theorem with  good error terms, some  Second Main Theorem with the form like $(i)$ or $(ii)$ is  expected.  
Thus, a natural question is that:  for what kind of  K\"ahler manifolds, the Second Main Theorem will be of  the similar  form as $(i)$ or $(ii)$?
 Motivated by that,  we  give  investigations to   Nevanlinna theory for a class of  K\"ahler manifolds,  i.e., the so-called \emph{spherically  symmetric K\"ahler manifolds} which satisfy the  requirements. 

\subsection{Main results}~

 Let $M_\sigma$ be a spherically  symmetric K\"ahler manifold of 
a pole $o$ and radius $R$ (see Section 2.2 for  definition). 
Consider a holomorphic map $f: M_\sigma\rightarrow N$ into a complex projective manifold  $N$ with $\dim N\leq \dim M_\sigma.$
In our settings, we  will remove all the restrictions  such as  completeness and non-positiveness of sectional curvature of a K\"ahler manifold in Theorems A and B. 
Without
going into to  the details of notations, we prove the following main result: 
\begin{thai}[=Theorem \ref{main theorem}]\label{} Let $M_\sigma$ be a spherically  symmetric K\"ahler manifold of complex dimension $m,$ with a pole $o$ and radius $R.$  Let $L$ be a positive line bundle over a complex projective manifold $N$ with $\dim_{\mathbb C}N\leq m,$  and  $D\in|L|$  be of  simple normal crossings.
 Let $f:M_\sigma\rightarrow N$ be a differentiably non-degenerate holomorphic map. Then 
 
$(a)$ For $R=\infty$   and  every $\delta>0,$  
  \begin{eqnarray*}
      T_{f}(r,L)+T_{f}(r,K_N)+T(r,\mathscr R_{M_\sigma}) 
     &\leq& \overline{N}_{f}(r,D)+O\big{(}\log^+ T_{f}(r,L)+\delta\log^+\sigma(r)\big{)}
  \end{eqnarray*}
holds for all $r\in(0,\infty)$ outside a set $E_\delta$ of finite Lebesgue measure. 

$(b)$ For $R<\infty$   and   every $\delta>0,$  
  \begin{eqnarray*}
     T_{f}(r,L)+T_{f}(r,K_N)+T(r,\mathscr R_{M_\sigma}) 
     &\leq& \overline{N}_{f}(r,D)+O\Big{(}\log^+ T_{f}(r,L)+\log\frac{1}{R-r}\Big{)}
  \end{eqnarray*}
 holds for all $r\in(0,R)$ outside a set $E_\delta$ with $\int_{E_{\delta}}(R-r)^{-1}dr<\infty.$
\end{thai}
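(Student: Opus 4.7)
The plan is to follow the Carlson--Griffiths--King paradigm for the Second Main Theorem, with the spherical symmetry of $M_\sigma$ replacing the rotational symmetry used in the classical proofs over $\CC^m$ and $\mathbb B^m$. The two halves of the theorem will then differ only in which version of the Borel-type calculus lemma is invoked at the end.

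First I would fix a smooth Hermitian metric $h_L$ on $L$ and a global holomorphic section $s_D$ of $L$ cutting out $D$, and construct on $N$ a Carlson--Griffiths singular volume form $\Psi$: a singular $(n,n)$-form with mild logarithmic singularities along $D$ whose Ricci current satisfies $-\mathrm{Ric}(\Psi)=c_1(L,h_L)+c_1(K_N)-[D]+(\text{bounded smooth correction})$. Pulling back by $f$ and combining with Poincar\'e--Lelong, the quantity $T_f(r,L)+T_f(r,K_N)+T(r,\mathscr R_{M_\sigma})-\overline N_f(r,D)$ should equal, up to $O(1)$, a constant multiple of the spherical mean $S(r,\log f^*\Psi)$ of $\log f^*\Psi$ over $\partial B(r)$. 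This identification rests on Jensen's formula with respect to the radial Green function of the geodesic ball $B(r)\subset M_\sigma$, which is explicit since $M_\sigma$ has a pole and the metric is determined by the single function $\sigma$ of $d(o,\cdot)$.

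The heart of the matter is to control $S(r,\log f^*\Psi)$ from above. By concavity of $\log$ one has $S(r,\log f^*\Psi)\leq \log S(r,f^*\Psi)$; the spherical symmetry then lets one rewrite $S(r,f^*\Psi)$ in terms of the $r$-derivative of a monotone function $F(r)$ whose total growth is controlled by $T_f(r,L)$ modulo bounded terms. This is the analogue, in the spherically symmetric setting, of the logarithmic derivative lemma, and crucially uses the factorization of the induced volume element on $\partial B(r)$ as $\sigma(r)^{2m-1}$ times the standard measure on $S^{2m-1}$.

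The argument is then closed by a Borel growth lemma applied to the bound $\log F'(r)$. In case $(a)$, $R=\infty$, the classical growth lemma on $(0,\infty)$, together with a change of variable driven by the radial weight $\sigma(r)^{2m-1}$, produces an exceptional set of finite Lebesgue measure and an error of the form $O\bigl(\log^+ T_f(r,L)+\delta\log^+\sigma(r)\bigr)$. In case $(b)$, $R<\infty$, the corresponding variant on the finite interval $(0,R)$ using the weight $(R-r)^{-1}$ yields an error $O\bigl(\log^+ T_f(r,L)+\log\tfrac{1}{R-r}\bigr)$ outside a set $E_\delta$ with $\int_{E_\delta}(R-r)^{-1}dr<\infty$. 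The main obstacle I expect is the third step, namely identifying $S(r,f^*\Psi)$ with the $r$-derivative of a $T_f(r,L)$-dominated quantity cleanly enough that the subsequent growth lemma produces exactly the asserted error terms; this in turn hinges on an explicit computation of $\mathscr R_{M_\sigma}$ in terms of $\sigma$ and its derivatives, so that the $T(r,\mathscr R_{M_\sigma})$ contribution on the left-hand side matches what is produced by Jensen's formula on $M_\sigma$.
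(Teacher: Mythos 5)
Your plan matches the paper's proof in structure: Carlson--Griffiths singular volume form on $N$, the Dynkin (Green--Jensen) identity over the explicit spherically symmetric Green function, a Calculus Lemma together with a Logarithmic Derivative Lemma driven by the factorization $dA_r=\sigma^{2m-1}(r)\,d\theta$, and finally the Ru--Sibony Borel-type growth lemma with weight $\gamma\equiv1$ in case $R=\infty$ and $\gamma(r)=(R-r)^{-1}$ in case $R<\infty$. The one obstacle you flag is not actually an issue: the $T(r,\mathscr R_{M_\sigma})$ term enters automatically once you form the scalar $\xi=f^*\Psi\wedge\alpha^{m-n}/\alpha^m$, because $\alpha^m=m!\det(h_{i\bar j})\bigwedge_j\frac{\sqrt{-1}}{\pi}dz_j\wedge d\bar z_j$ contributes $-dd^c\log\det(h_{i\bar j})=\mathscr R_{M_\sigma}$ to $dd^c\log\xi$, so no explicit formula for $\mathscr R_{M_\sigma}$ in terms of $\sigma$ is ever needed.
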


We interpret  how  results $(i)$ and $(ii)$ can be derived  from ours. 
In the case that  $M_\sigma=\mathbb C^m$ (with standard Euclidean metric), we 
have  $R=\infty,  \sigma(r)=r$ 
and $\mathscr R_{M_\sigma}=0,$ where $\mathscr R_{M_\sigma}$ denotes the Ricci form of $M_\sigma.$ By $(a)$ in the above theorem,   
it immediately  deduces the theorem of Carlson-Griffiths-King (see Corollary \ref{cgk}).
The Second Main Theorem for the case that $M_\sigma= \mathbb B^m$ (with standard Euclidean metric) also  follows by noting that       
   $R=1, \sigma(r)=r$ and $\mathscr R_{\mathbb B^m}=0$ (see Corollary \ref{ccc}).  
 
A manifold is said to be \emph{non-parabolic} if it admits a non-constant positive superharmonic function,  and said to be \emph{parabolic} otherwise.
\begin{thaii}[=Theorem \ref{thm1}]   
 Let $M_\sigma$ be a geodesically complete and non-compact spherically symmetric  K\"ahler  manifold of complex dimension $m$. Let 
  $L$ be a positive line bundle over a complex projective manifold $N$ with $\dim_{\mathbb C}N\leq m,$  and  $D\in|L|$  be  of  simple normal crossings.
 Let $f: M_\sigma\rightarrow N$ be a differentiably non-degenerate holomorphic map. Assume that $M_\sigma$ is parabolic. Then for every $\delta>0,$  
  \begin{eqnarray*}
      T_{f}(r,L)+T_{f}(r,K_N)+T(r,\mathscr R_{M_\sigma}) 
     &\leq& \overline{N}_{f}(r,D)+O\big{(}\log^+ T_{f}(r,L)+\delta\log^+r\big{)}
  \end{eqnarray*}
holds for all $r\in(0,\infty)$ outside a set $E_\delta$ of finite Lebesgue measure.
\end{thaii}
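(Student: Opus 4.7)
The plan is to derive Theorem II as a consequence of Theorem I(a), using the parabolicity hypothesis only to replace $\log^+\sigma(r)$ by $\log^+ r$ in the error term. Since $M_\sigma$ is geodesically complete and non-compact with a pole $o$, the radius $R$ must be infinite, so Theorem I(a) applies directly and yields, for every $\delta>0$,
$$T_f(r,L)+T_f(r,K_N)+T(r,\mathscr R_{M_\sigma}) \leq \overline N_f(r,D) + O\bigl(\log^+ T_f(r,L)+\delta\log^+\sigma(r)\bigr)$$
for $r$ outside a set $E_\delta$ of finite Lebesgue measure. The reduction is then complete once I can estimate $\log^+\sigma(r)$ by a constant multiple of $\log^+ r$ off a set of finite measure.

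For the second step I would invoke the classical Khas'minski-type criterion for spherically symmetric manifolds of real dimension $2m$: such a manifold is parabolic if and only if
$$\int^\infty \frac{dt}{\sigma(t)^{2m-1}}=\infty.$$
Fix any $\epsilon>0$ and set $A_\epsilon=\{r\geq 1:\sigma(r)^{2m-1}>r^{1+\epsilon}\}$. On $A_\epsilon$ one has $dt/\sigma(t)^{2m-1}\leq dt/t^{1+\epsilon}$, so $\int_{A_\epsilon}dt/\sigma(t)^{2m-1}<\infty$. Combined with the essential monotonicity of the warping function $\sigma$ arising from the spherically symmetric Kähler structure, this should force $A_\epsilon$ itself to have finite Lebesgue measure.

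The final step is straightforward book-keeping. On the complement $A_\epsilon^c$ one has $\log^+\sigma(r)\leq \frac{1+\epsilon}{2m-1}\log^+ r + O(1)$, so after substitution the error term becomes $O(\log^+ T_f(r,L)+\delta\log^+ r)$ outside $E_\delta\cup A_\epsilon$, which remains of finite measure. Because $\delta$ is a free parameter, rescaling it by the factor $(2m-1)/(1+\epsilon)$ allows any prescribed $\delta'>0$ in the statement of Theorem II to be attained.

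The main obstacle I foresee is justifying the passage from the divergent-integral criterion to the finite-measure bound on $A_\epsilon$: the argument relies on exploiting the geometric monotonicity of the warping function $\sigma$ rather than only the abstract integrability of $1/\sigma^{2m-1}$ on $A_\epsilon$. Should $\sigma$ fail to be monotone on some bounded region, a slightly more subtle argument using an upper monotone envelope of $\sigma$ would be needed, but the structure of the reduction through Theorem I(a) stays the same.
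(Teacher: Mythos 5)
Your overall strategy coincides with the paper's: note that completeness and non-compactness force $R=\infty$, appeal to the parabolicity criterion (Corollary \ref{cor1}) to extract the divergence $\int_1^\infty \sigma^{1-2m}(r)\,dr=\infty$, use this to dominate $\log^+\sigma(r)$ by a multiple of $\log^+r$, and then invoke Theorem I(a). The paper's proof is exactly this, in one paragraph, with the middle step stated as ``which leads to $\log^+\sigma(r)\leq O(\log^+r)$'' and no further justification.

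The concern you raise about $A_\epsilon=\{r\geq 1:\sigma^{2m-1}(r)>r^{1+\epsilon}\}$ is well placed, and you should be aware that the paper has the same gap. Two remarks. First, the inequality $\int_{A_\epsilon}\sigma^{1-2m}(t)\,dt<\infty$ is a direct consequence of the definition of $A_\epsilon$ alone (it does not use the divergence hypothesis at all), so it carries no information; you cannot get the finite-measure bound from it, and it is not the right lever. Second, the appeal to ``essential monotonicity of the warping function $\sigma$ arising from the spherically symmetric K\"ahler structure'' is not available: a spherically symmetric K\"ahler manifold requires only $\sigma>0$ smooth with $\sigma(0)=0$, $\sigma'(0)=1$ (see Section 2.2), and for $m=1$ any such $\sigma$ gives a K\"ahler (indeed conformal) surface, so no monotonicity is forced. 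Worse, even granting that $\sigma$ is strictly increasing, a diagonal-type construction (make $\sigma^{2m-1}$ jump past $r^n$ on a block of length comparable to $r$, then stay flat for a very long stretch before the next block) yields an increasing $\sigma$ with $\int_1^\infty \sigma^{1-2m}(r)\,dr=\infty$ while $\{r:\sigma(r)>r^K\}$ has infinite measure for every $K$; the outer flat stretches are what keep the integral divergent. So neither a pointwise bound $\log^+\sigma(r)\leq O(\log^+r)$ nor such a bound off a finite-measure set follows from parabolicity alone.

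In summary: your reduction through Theorem I(a), Corollary \ref{cor1}, and the final $\delta$-rescaling is the same as the paper's, and the book-keeping in your last paragraph is fine. The missing piece---passing from the divergence of $\int\sigma^{1-2m}$ to a finite-measure control of $\{\log^+\sigma(r)>K\log^+r\}$---is genuinely absent from both arguments, and the ``monotonicity'' patch you propose does not close it. A correct proof would need either an additional regularity hypothesis on $\sigma$ (for example that $\sigma$ is nondecreasing and $\sigma(r)/r^K$ is eventually monotone for some $K$, or a doubling-type condition on $\sigma$) or a different route that exploits the term $T(r,\mathscr R_{M_\sigma})$ to absorb the excess in $\delta\log^+\sigma(r)$ when $\sigma$ is large.
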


We cconsider a defect relation under certain  curvature condition. 
For two  holomorphic line bundles $L_1, L_2$ over $N,$  set  
\begin{eqnarray*}
\left[\frac{c_1(L_2)}{c_1(L_1)}\right]&=&\inf\left\{t\in\mathbb R: \ \eta_2<t\eta_1;  \ ^\exists\eta_1\in c_1(L_1),\  ^\exists\eta_2\in c_1(L_2) \right\}. 
\end{eqnarray*}
 Let $\Theta_f(D)$  be the \emph{simple defect} of $f$ with respect to $D$ defined  by
$$\Theta_f(D)=1-\limsup_{r\rightarrow R}\frac{\overline{N}_{f}(r,D)}{T_{f}(r,L)}.$$   
\begin{thaiii}[=Corollary \ref{defect}] The  conditions are assumed as same as in Theorem I.   In addition,  assume  that   $M_\sigma$ has non-negative scalar curvature.

$(a)$ For $R=\infty,$ if $T_{f}(r,L)\geq O(\log^+\sigma(r))$ as $r\rightarrow \infty,$ then 
$$\Theta_f(D)\leq \left[\frac{c_1(K^*_N)}{c_1(L)}\right].$$

$(b)$ For $R<\infty,$   if  $\log(R-r)=o(T_{f}(r,L))$ as $r\rightarrow R,$ then
 $$\Theta_f(D)\leq \left[\frac{c_1(K^*_N)}{c_1(L)}\right].$$
\end{thaiii}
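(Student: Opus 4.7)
The strategy is a direct division argument based on the Second Main Theorem of Theorem I. I would first apply Theorem I$(a)$ in case $(a)$, and Theorem I$(b)$ in case $(b)$, both of which give
\begin{equation*}
T_f(r,L)+T_f(r,K_N)+T(r,\mathscr R_{M_\sigma})\leq\overline N_f(r,D)+O\!\left(\log^+T_f(r,L)+E(r)\right)
\end{equation*}
outside an exceptional set $E_\delta$, with $E(r)=\delta\log^+\sigma(r)$ in case $(a)$ and $E(r)=\log(R-r)^{-1}$ in case $(b)$.

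Next I would eliminate the Ricci term on the left using the hypothesis that the scalar curvature $s_{M_\sigma}$ is nonnegative. The K\"ahler identity $m\,\mathscr R_{M_\sigma}\wedge\omega^{m-1}=s_{M_\sigma}\,\omega^m$ makes the integrand defining $T(r,\mathscr R_{M_\sigma})$ pointwise nonnegative, so $T(r,\mathscr R_{M_\sigma})\geq 0$ and can be dropped. Dividing through by $T_f(r,L)$ and passing to the limit along a sequence $r_n\to R$ outside $E_\delta$ (such a sequence exists because $E_\delta$ has finite Lebesgue measure in case $(a)$ and finite $(R-r)^{-1}$-integral in case $(b)$) kills the error terms: the quotient $\log^+T_f(r,L)/T_f(r,L)$ is automatically $o(1)$; in case $(b)$, $\log(R-r)^{-1}/T_f(r,L)=o(1)$ by hypothesis; in case $(a)$, $\delta\log^+\sigma(r)/T_f(r,L)\leq C\delta$ by hypothesis, which one sends to $0$ by letting $\delta\to 0^+$ after taking the $\limsup$. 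Invoking monotonicity of $T_f$ and $\overline N_f$ in $r$ to promote the subsequential estimate to the full $\limsup$, one obtains
\begin{equation*}
\limsup_{r\to R}\frac{\overline N_f(r,D)}{T_f(r,L)}\geq 1+\liminf_{r\to R}\frac{T_f(r,K_N)}{T_f(r,L)}.
\end{equation*}

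To finish, I would unpack $[c_1(K_N^*)/c_1(L)]$. For every $t>[c_1(K_N^*)/c_1(L)]$ there exist representatives $\eta_1\in c_1(L)$ and $\eta_2\in c_1(K_N^*)$ with $\eta_2<t\eta_1$ pointwise on $N$, and computing the characteristic functions with the corresponding Hermitian metrics yields $T_f(r,K_N^*)\leq t\,T_f(r,L)+O(1)$, equivalently $\liminf_{r\to R}T_f(r,K_N)/T_f(r,L)\geq -t$. Plugging this into the previous display gives $\Theta_f(D)\leq t$, and letting $t\searrow[c_1(K_N^*)/c_1(L)]$ delivers both $(a)$ and $(b)$.

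The main obstacle I anticipate is the second step: cleanly propagating the Second Main Theorem estimate, which only holds off $E_\delta$, to the $\limsup$ appearing in the definition of $\Theta_f(D)$. The usual remedy, exploiting monotonicity of the Nevanlinna functions together with the smallness of $E_\delta$, should suffice, but the case $R<\infty$ demands extra care because the error $\log(R-r)^{-1}$ blows up precisely where $E_\delta$ accumulates, so the subsequence $\{r_n\}$ avoiding $E_\delta$ must be chosen so that the hypothesis $\log(R-r)=o(T_f(r,L))$ can still be invoked along it.
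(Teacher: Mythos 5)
Your proposal is correct and takes essentially the same approach as the paper; the paper merely records that the conclusion ``follows directly from Theorem \ref{main theorem}'' via an intermediate statement with the term $-\liminf T(r,\mathscr R_{M_\sigma})/T_f(r,L)$, and then drops that term using nonnegativity of the scalar curvature (the sign observation you make via the trace identity $\mathscr R_{M_\sigma}\wedge\alpha^{m-1}$ being a positive multiple of $s_{M_\sigma}\alpha^m$ is exactly what makes $T(r,\mathscr R_{M_\sigma})\geq 0$, which the paper attributes to its displayed formula $(\ref{scalar})$). The one small point worth noting is that monotonicity of $T_f$ and $\overline N_f$ is not actually needed to pass from the estimate off $E_\delta$ to the full $\limsup$: since $\limsup_{r\to R}$ over all $r$ dominates $\limsup_{r\to R,\,r\notin E_\delta}$ and $E_\delta$ cannot cover any full neighborhood of $R$ (its $\gamma$-integral is finite while $\int^R\gamma=\infty$), the subsequential bound promotes for free, and the hypothesis in case $(b)$ directly kills $\log\tfrac{1}{R-r}/T_f(r,L)$ along any sequence tending to $R$.
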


\section{Spherically  symmetric manifolds}

\subsection{Laplace operators, polar coordinates and Ricci curvatures}~

\subsubsection{Laplace operators and polar coordinates}~

Let $(M, g)$ be a Riemannian manifold with   Levi-Civita connection  $\nabla.$  The well-known Laplace-Beltrami operator $\Delta_M$ of $\nabla$
is defined by $$\Delta_M=\sum_{i,j}g^{ij}(\nabla_{\partial_i}\nabla_{\partial_j}-\nabla_{\nabla_{\partial_i}\partial_j}),$$
where $\partial_j=\partial/\partial x_j$ and $(g^{ij})$ is the inverse  of $(g_{ij}).$  When  acting on  a function,  $\Delta_M$ has  the  implicit  formula 
$$\Delta_M=\sum_{i,j}\frac{1}{\sqrt{\det(g_{st})}}\frac{\partial}{\partial x_i}\Big(\sqrt{\det(g_{st})}g^{ij}\frac{\partial}{\partial x_j}\Big).$$
 Fix $o\in M,$ one denotes by $B_o(r), S_o(r)$   the geodesic ball and  geodesic sphere of radius $r$ with center at $o$  in $M$ respectively, 
  and  by $r(x)$  the Riemannian distance function of $x$ from $o.$
 Set  $Cut^*(o)=Cut(o)\cup\{o\},$ where $Cut(o)$ is the cut locus of $o.$ 
 For $x\in M\setminus Cut^*(o),$ one can define the polar coordinates
 $(r, \theta)$ of $x$ with respect to the pole $o,$ where $r=r(x)$ is called the polar radius and  $\theta\in \mathbb S^{d-1}$ is called the polar angle which provides the direction $\Gamma_\theta\in T_oM$ of the minimal geodesic connecting $o$ with $x$ at $o,$ 
in which  $d=\dim M,$  $S^{d-1}$ denotes  the  unit sphere in $\mathbb R^d$  centered at the origin.  
Now write the  metric $g$  of $M$ in the polar coordinate form
$$ds^2=dr^2+\sum_{i,j}\tilde g_{ij}d\theta_id\theta_j,$$ 
where $\theta_j$ are  coordinate components  of $\theta,$ and  $\tilde g_{ij}$ 
 is the Riemannian metric  on  $S_o(r)\setminus Cut(o).$ This gives  the Riemannian area element on  $S_o(r)\setminus Cut(o)$ that 
$$dA_r=\sqrt{\det{(\tilde{g}_{st}})}d\theta_1\cdots d\theta_{d-1}.$$
If $\theta_j$ are defined almost everywhere on $\mathbb S^{d-1},$ then we have 
$$Area(S_o(r))=\int_{\mathbb S^{d-1}}\sqrt{\det{(\tilde{g}_{st}})}d\theta_1\cdots d\theta_{d-1}$$
In terms of polar coordinates, the Laplace-Beltrami operator is written as
$$\Delta_M=\frac{\partial^2}{\partial r^2}+\frac{\partial \log\sqrt{\det(\tilde{g}_{st})}}{\partial r}\frac{\partial }{\partial r}+\Delta_{S_o(r)},$$
where $\Delta_{S_o(r)}$ is the induced  Laplace-Beltrami operator on $S_o(r).$ 

Now we turn to Hermitian manifolds. Let $(M, h)$ be a Hermitian manifold with Hermitian connection $\tilde\nabla.$
Note that $M$ can be regarded as a Riemannian manifold with Riemannian metric 
$g=\Re h,$ thus there is also  the Levi-Civita connection $\nabla$  on $M.$ 
Now, extend $\nabla$ linearly to $T_{\mathbb C}M=TM\otimes\mathbb C.$
In general, $\tilde\nabla\not=\nabla$ since  the torsion tensor of $\tilde\nabla$ may not vanish for the general Hermitian manifolds.  Hence, the  Laplace operator  $\tilde\Delta_M$ of $\tilde\nabla$ does not coincide with  the Laplace-Beltrami operator $\Delta_M$ of $\nabla.$ 
However,  the case for $\tilde\nabla=\nabla$ happens when $M$ is a  K\"ahler manifold. 
Consequently, 
$$\Delta_M=\tilde\Delta_M=2\sum_{i,j}h^{i\bar j}\frac{\partial^2}{\partial z_i\partial \bar z_j}$$
 acting on  a function for that $M$ is K\"ahlerian, 
where  $z_j$ are local holomorphic coordinates and  $(h^{i\bar j})$ is the inverse of $(h_{i\bar j}).$ 

\subsubsection{Ricci curvatures}~

Let $(M, h)$ be an $m$-dimensional Hermitian manifold with  K\"ahler form
$$\alpha=\frac{\sqrt{-1}}{\pi}\sum_{i,j}h_{i\bar j}dz_i\wedge d\bar z_j.$$
 The metric $h$ induces a Hermitian metric $\det(h_{i\bar j})$ on the anticanonical bundle $K^*_M.$
 The Chern  form of $K^*_M$ associated to this metric is defined  by
$$\mathscr R_M:=c_1(K^*_M, \det(h_{i\bar j}))=-dd^c\log\det(h_{i\bar j})$$
which is usually called the Ricci form of $M$  due to $\mathscr R_M={\rm{Ric}}(\alpha^m),$
where $$d=\partial+\bar\partial, \ \ \ d^c=\frac{\sqrt{-1}}{4\pi}(\bar\partial-\partial).$$
Assume that $h$ is a K\"ahler metric,  then $\mathscr R_M$ can be written as
$$\mathscr R_M=\frac{\sqrt{-1}}{2\pi}\sum_{i,j}R_{i\bar{j}}dz_i\wedge d\bar{z}_j,$$
where ${\rm{Ric}}_{\mathbb C}=\sum_{i,j}R_{i\bar{j}}dz_i\otimes d\bar{z}_j$ is the complex Ricci curvature tensor of $h.$ 
Regard $M$ as a Riemannian manifold with Riemannina metric $g=\Re h,$ then 
there is also  a real Ricci curvature tensor written as ${\rm{Ric}}_{\mathbb R}=\sum_{i,j}R_{ij}dx_i\otimes dx_j$ of $g.$ 
Denote by $s_{\mathbb C},$ $s_{\mathbb R}$  the scalar curvatures of $h, g$ respectively, i.e., 
$$s_{\mathbb C}=\sum_{i,j}h^{i\bar j}R_{i\bar j}, \ \ \ s_{\mathbb R}=\sum_{i,j}g^{ij}R_{ij}.$$
Then we have
$$s_{\mathbb R}=2s_{\mathbb C}=-\Delta_M\log\det(h_{i\bar j}).$$

\subsection{Spherically  symmetric manifolds}~

Let $(M, g)$ be a Riemannian manifold. We say that $M$ is a manifold with a pole $o$ if $Cut(o)=\emptyset.$  If, in addition, $M$ is complete or geodesically complete, 
then $M$ is diffeomorphic to $\mathbb R^d,$ where $d=\dim M.$
A  Riemannian manifold with a pole $o$ is called a  \emph{spherically symmetric manifold}  if  the induced  metric $\tilde g_{ij}$ on $S_o(r)$ is of the form 
$$\sum_{i,j}\tilde g_{ij}(r,\theta)d\theta_id\theta_j=\sigma^2(r)d\theta^2,$$
where $d\theta^2=d\theta_1^2+\cdots+d\theta_{d-1}^2$ is  the standard Euclidean metric on $S^{d-1}$ and $\sigma$ is a positive smooth function of $r.$
For convenience,  one  uses $M_\sigma$ to denote such  manifolds. 

Let a smooth positive function $\sigma$ on $(0,R)$ with $0<R\leq\infty,$  the necessary and sufficient condition (see \cite{AG}) for that such a manifold exists, is that 
$$\sigma(0)=0, \ \ \sigma'(0)=1.$$
One calls $R$ the  \emph{radius} of $M_\sigma$ with respect to the pole $o.$ Clearly,  $R=\infty$ if $M$ is geodesically  complete and non-compact.
Consider a spherically  symmetric manifold $M_\sigma$ of a pole $o$ and radius $R.$ For $r<R,$ we have 
$$dA_r=\sigma^{d-1}(r)d\theta_1\cdots d\theta_{d-1}.$$
 This means that   
 $$Area(S_o(r))=\int_{\mathbb S^{d-1}}\sigma^{d-1}(r)d\theta_1\cdots d\theta_{d-1}=\omega_{d-1}\sigma^{d-1}(r),$$
 $$Vol(B_o(r))=\omega_{d-1}\int_0^r\sigma^{d-1}(t)dt,$$
 where $\omega_{d-1}$ is the area of $S^{d-1}.$  We also have
 $$\Delta_{M_\sigma}=\frac{\partial^2}{\partial r^2}+(d-1)\frac{\sigma'}{\sigma}\frac{\partial }{\partial r}+\frac{1}{\sigma^2}\Delta_{\theta},$$
 where $\Delta_{\theta}$ is the standard Laplace-Beltrami operator on $S^{d-1}.$
 
\noindent\textbf{Several typical models}

Let $M_\sigma$ be a  spherically  symmetric   manifold of radius $R.$  

$(a)$  If $R=\infty, \sigma(r)=r,$
 then $M_\sigma\cong\mathbb R^d$ (with standard Euclidean metric). The Laplace-Beltrami operator acquires the form
 $$\Delta=\frac{\partial^2}{\partial r^2}+\frac{d-1}{r}\frac{\partial}{\partial r}+\frac{1}{r^2}\Delta_\theta.$$
   
 $(b)$   If $R=\infty, \sigma(r)=\sinh r,$
 then $M_\sigma\cong  H^d,$ where $H^d$  is the $d$-dimensional upper half-space with  hyperbolic metric of sectional curvature $-1$ in
  $\mathbb R^d$. The Laplace-Beltrami operator acquires the form 
 $$\Delta=\frac{\partial^2}{\partial r^2}+(d-1)\cot r\frac{\partial}{\partial r}+\frac{1}{\sin^2r}\Delta_\theta.$$

 $(c)$  If $R=\pi, \sigma(r)=\sin r,$
 then $M_\sigma\cong S^{d}$  (endpoint with $r=\pi$ is added to $M_\sigma$), where $S^d$ is the  $d$-dimensional  unit sphere centered at the origin in $\mathbb R^{d+1}.$ The Laplace-Beltrami operator acquires the form
 $$\Delta=\frac{\partial^2}{\partial r^2}+(d-1)\coth r\frac{\partial}{\partial r}+\frac{1}{\sinh^2r}\Delta_\theta.$$
 
  \subsection{Green functions for spherically symmetric  manifolds}~ 

Let $M_\sigma$ be a $d$-dimensional spherically  symmetric  manifold of a pole $o$ and radius $R.$
Establish a polar coordinate system $(o, r, \theta)$ of $M_\sigma.$
For $0<r<R,$  we shall  compute the harmonic measure $d\pi_o^r(x)$ on $S_o(r)$ with respect to $o,$ as well as the Green function $g_r(o, x)$ of $\Delta_{M_\sigma}/2$ for $B_o(r)$ with pole at $o$ and Dirichlet boundary condition, i.e.,
$$-\frac{1}{2}\Delta_{M_\sigma}g_r(o,x)=\delta_o(x) \ \text{for} \ x\in B_o(r); \ \ g_r(o,x)=0 \ \text{for} \ x\in S_o(r),$$
where $\delta_o$ is the Dirac function. 
\begin{lemma}\label{asdf}  For $0<r<R,$ we have
$$d\pi_o^r(x)=\frac{d\theta_1\cdots d\theta_{d-1}}{\omega_{d-1}}, \ \  g_r(o, x)=\frac{2}{\omega_{d-1}}\int_{r(x)}^r\frac{dt}{\sigma^{d-1}(t)},$$
where $\omega_{d-1}$ is the area of the unit sphere in $\mathbb R^d$ with $d\geq2.$ 
\end{lemma}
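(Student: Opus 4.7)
The plan is to exploit the rotational symmetry of $M_\sigma$ about the pole $o$ and solve an ordinary differential equation.

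\textbf{Harmonic measure.} First I will establish the formula for $d\pi_o^r$. By definition, $d\pi_o^r$ is the probability measure on $S_o(r)$ such that for every continuous $\varphi$ on $S_o(r)$, the harmonic extension $u$ of $\varphi$ into $B_o(r)$ satisfies $u(o)=\int_{S_o(r)}\varphi\,d\pi_o^r$. Since the metric on $M_\sigma$ is rotationally invariant about $o$ (the angular part is $\sigma^2(r)d\theta^2$, with $d\theta^2$ the standard round metric on $\mathbb S^{d-1}$), the Laplace--Beltrami operator $\Delta_{M_\sigma}$ commutes with every rotation fixing $o$. Uniqueness of the Dirichlet problem then forces the harmonic measure to be rotation invariant, hence proportional to $d\theta_1\cdots d\theta_{d-1}$; normalization by $\omega_{d-1}$ gives the stated formula.

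\textbf{Green function.} Next I will construct $g_r(o,\cdot)$ explicitly as a radial function. By the rotational symmetry and uniqueness of the Green function, we may look for $g_r(o,x)=G(r(x))$. Using the polar form
$$\Delta_{M_\sigma}=\frac{\partial^2}{\partial r^2}+(d-1)\frac{\sigma'}{\sigma}\frac{\partial}{\partial r}+\frac{1}{\sigma^2}\Delta_\theta,$$
the condition $\Delta_{M_\sigma}G=0$ on $B_o(r)\setminus\{o\}$ reduces to $(\sigma^{d-1}G')'=0$, whence $G'(\rho)=C/\sigma^{d-1}(\rho)$. Imposing the Dirichlet boundary condition $G(r)=0$ yields
$$G(\rho)=-C\int_\rho^r\frac{dt}{\sigma^{d-1}(t)}.$$

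\textbf{Fixing the constant.} The last step is to determine $C$ from $-\tfrac12\Delta_{M_\sigma}g_r(o,\cdot)=\delta_o$. I will integrate over a small ball $B_o(\varepsilon)$, apply the divergence theorem, and use the polar area element $dA_\varepsilon=\sigma^{d-1}(\varepsilon)\,d\theta_1\cdots d\theta_{d-1}$:
$$1=-\frac{1}{2}\int_{B_o(\varepsilon)}\Delta_{M_\sigma}G\,dV=-\frac{1}{2}\int_{S_o(\varepsilon)}G'(\varepsilon)\,dA_\varepsilon=-\frac{C\,\omega_{d-1}}{2}.$$
Thus $C=-2/\omega_{d-1}$, giving the claimed expression for $g_r(o,x)$.

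The argument is essentially a separation-of-variables computation, and I do not foresee a genuine obstacle; the only points that require a little care are justifying that the Dirac-mass normalization really does survive the limit $\varepsilon\to 0$ (which follows because $\sigma(\varepsilon)\sim\varepsilon$ near $0$ by $\sigma(0)=0,\ \sigma'(0)=1$, so the Riemannian model is asymptotically Euclidean at $o$) and checking that the integral $\int_0^r dt/\sigma^{d-1}(t)$ diverges at $0$, which confirms the correct pole behaviour of $g_r(o,\cdot)$ at $o$.
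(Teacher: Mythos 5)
Your proof is correct and gives a genuine alternative to the paper's derivation of the Green function. For the harmonic measure, both you and the paper appeal to rotational symmetry; your version is actually a bit more complete, since you explain why the harmonic measure is rotation invariant (the Laplacian commutes with rotations fixing $o$, and the Dirichlet problem has a unique solution), and then identify it with the unique rotation-invariant probability measure on $S_o(r)$, whereas the paper only observes that the normalized area measure is rotationally invariant. For the Green function, the paper takes the opposite route: it starts from the Poisson-kernel relation $-\tfrac12\,\partial g_r/\partial n = d\pi_o^r/dS_r$ (using the already-computed harmonic measure), which immediately pins down the boundary normal derivative $g_r'(r) = -2/(\omega_{d-1}\sigma^{d-1}(r))$; combining this with the radial ODE $(\sigma^{d-1}g_r')'=0$ off the pole and the Dirichlet condition then gives the formula without any $\varepsilon\to 0$ limit. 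Your approach instead solves the same ODE and fixes the constant by a flux argument at the pole; this requires the small extra care you acknowledge — that the singularity at $o$ is integrable and that the boundary flux over $S_o(\varepsilon)$ is independent of $\varepsilon$ because $G$ is harmonic on the punctured ball — but it has the advantage of not presupposing the Poisson-kernel identity. Both arguments are standard and correct; the paper's is a little slicker by reusing the harmonic-measure computation, while yours is more self-contained.

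One small remark: in your divergence-theorem step you write $\int_{B_o(\varepsilon)}\Delta_{M_\sigma}G\,dV=\int_{S_o(\varepsilon)}G'(\varepsilon)\,dA_\varepsilon$ as if $G$ were smooth across $o$; strictly this should be carried out on the annulus $B_o(\varepsilon)\setminus B_o(\varepsilon')$ and then $\varepsilon'\to 0$, which is exactly the subtlety you flag at the end, so no substantive gap.
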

\begin{proof}  By the  property of spherically  symmetric   manifolds,  the induced  area measure   $$dS_r(x)=\sigma^{d-1}(r)d\theta_1\cdots d\theta_{d-1}$$ 
 on $S_o(r)$ is a rotationally invariant one with respect to $o.$ On the other hand, $dS_r(x)/Area(S_o(r))$ is  a probability measure on $S_o(r).$ Thus,
$$d\pi_o^r(x)=\frac{dS_r(x)}{Area(S_o(r))}=\frac{d\theta_1\cdots d\theta_{d-1}}{\omega_{d-1}}.$$
Notice the relation 
$$-\frac{1}{2}\frac{\partial g_r(o,x)}{\partial n}=\frac{d\pi^r_o(x)}{dS_r(x)},$$
where $\partial/\partial n$ is the inward normal derivative on $S_o(r).$ Then 
$$\frac{\partial g_r(o,x)}{\partial n}=-\frac{2}{\omega_{d-1}\sigma^{d-1}(r)}.$$
On the other hand,  $$-\frac{1}{2}\Delta_{M_\sigma}g_r(o,x)=\delta_o(x)$$
 for  $x\in B_o(r).$ Combine the above two equations, it is trivial to confirm that
 $$g_r(o, x)=\frac{2}{\omega_{d-1}}\int_{r(x)}^r\frac{dt}{\sigma^{d-1}(t)}.$$
\end{proof}

In what follows, we  give two  examples to compute  Green functions.

\noindent\textbf{Example 1.} 
   $M_\sigma=\mathbb R^d$  (with standard Euclidean metric)

Take $o$ as the coordinate origin of $\mathbb R^d$. By  $\sigma(r)=r$ and $r(x)=\|x\|,$ one has    
$$g_r(o, x)=\frac{2}{\omega_{d-1}}\int_{\|x\|}^r\frac{dt}{t^{d-1}},$$
which can be computed easily. 

\noindent\textbf{Example 2.} 
  $M_\sigma=\mathbb H$ (Poincar\'e upper half-plane, i.e., $H^2$ with Poincar\'e metric) 

Take $o=(0, \sqrt{-1}).$ Let  $\phi: \mathbb D\rightarrow \mathbb H$ be the biholomorphic map as follows   
$$\phi(z)=\frac{1-\sqrt{-1}z}{z-\sqrt{-1}}.$$
Note that     
 $\phi^*h$ is the Poincar\'e metric on $\mathbb D,$ where $h$ is the   Poincar\'e metric on $\mathbb H.$  By $\sigma(r)=\sinh r$ and $\omega_1=2\pi,$ we see that 
 \begin{equation}\label{qqq3}
 g_r(o, x)=\frac{2}{\pi}\int_{r(x)}^r\frac{dt}{e^t-e^{-t}}=\frac{1}{\pi}\log\frac{(e^r-1)(e^{r(x)}+1)}{(e^r+1)(e^{r(x)}-1)},
 \end{equation}
 where
 $$r(x)=\log\frac{1+|\phi^{-1}(x)|}{1-|\phi^{-1}(x)|}.$$
 
\section{Holomorphic maps on spherically  symmetric  K\"ahler  manifolds}~
 \subsection{Nevanlinna's functions and First Main Theorem}~
 \subsubsection{Nevanlinna's functions}~
  
  We  extend the notion of Nevanlinna's functions containing  characteristic function, counting function and proximity function to  spherically  symmetric K\"ahler manifolds. 
 Let $(M_\sigma, h)$ be a spherically  symmetric  K\"ahler  manifold of complex dimension $m,$ with a pole $o$ and radius $R.$  Then the  K\"ahler form of $M$ is written as
 $$\alpha=\frac{\sqrt{-1}}{\pi}\sum_{i.j}h_{i\bar j}dz_i\wedge d\bar z_{j}.$$
Fix a $r_0$ such that $0<r_0<R.$   For any (1,1)-form $\eta$ on $M_\sigma,$ define formally the notation
 $$T(r,\eta)=\int_{r_0}^{r}\frac{dt}{\sigma^{2m-1}(t)}\int_{B_o(t)}\eta\wedge\alpha^{m-1}.$$
 Let $f:M_\sigma\rightarrow N$ be a holomorphic map into a complex projective manifold $N,$ and  $(L, h_L)$  be  a positive Hermitian line bundle  over $N.$ 
Let $|L|$ be the
\emph{complete linear system} of  all effective divisors $D_s$ with $s\in  H^0(M,L),$ where $D_s$ denotes  the zero divisor of a section $s.$  
   For $r_0<r<R,$ the \emph{characteristic function} of 
 $f$ with respect to $L$ is defined by 
 $$T_{f}(r,L)=T_f(r,c_1(L, h_L))$$
 up to a bounded term.   
 Since $M_\sigma$ is K\"ahlerian, then
$$\Delta_M=2\sum_{i,j}h^{i\bar j}\frac{\partial^2}{\partial z_i\partial \bar z_j}.$$
Thus, 
 $$\Delta_{M_\sigma}\log h_L=-4m\frac{f^*c_1(L,h)\wedge\alpha^{m-1}}{\alpha^m}.$$
In terms of Green function, we have 
 \begin{eqnarray*}
T_{f}(r,L)&=& -\frac{1}{4}\int_{B_o(r)}g_{r}(o,x)\Delta_{M_\sigma}\log h_L(x)dV(x) \\
&&+\frac{1}{4}\int_{B_o(r_0)}g_{r_0}(o,x)\Delta_{M_\sigma}\log h_L(x)dV(x),
\end{eqnarray*}
 where $dV=\pi^m\alpha^m/m!$ is the Riemannian volume element of $M_\sigma.$
 Now for a divisor  $D\in|L|,$  the \emph{counting function} of $f$ with respect to $D$ is defined by
 $$N_{f}(r,D)=\int_{r_0}^r\frac{dt}{\sigma^{2m-1}(t)}\int_{f^*D\cap B_o(t)}\alpha^{m-2}.$$
 Let $s_D$ be the canonical section of $L,$ it is of zero divisor $D.$    
 Write $s_D=\tilde s_De,$ where $e$ is a local holomorphic frame of $L.$ By Poincar\'e-Lelong formula \cite{gri},  $N_{f,D}(r)$ has an alternate expression 
  \begin{eqnarray*}
N_{f}(r,D)&=&\frac{1}{4}\int_{B_o(r)}g_r(o,x)\Delta_{M_\sigma}\log |\tilde s_D\circ f(x)|^2dV(x) \\
&&-\frac{1}{4}\int_{B_o(r_0)}g_{r_0}(o,x)\Delta_{M_\sigma}\log |\tilde s_D\circ f(x)|^2dV(x) 
\end{eqnarray*}
 In a similar way, we  define the \emph{simple counting function} $\overline N_{f}(r,D)$ for Supp$f^*D.$

For the \emph{proximity function} of $f$ with respect to $D,$  we use the definition  
\begin{eqnarray*}
m_{f}(r,D)&=&\int_{S_o(r)}\log\frac{1}{\|s_D\circ f(x)\|}d\pi^r_o(x) \\
&=& \frac{1}{\omega_{2m-1}}\int_{S^{2m-1}}\log\frac{1}{\|s_D\circ f(r, \theta)\|}d\theta_1\cdots d\theta_{2m-1},
\end{eqnarray*}
where $(r, \theta)$ stands for  the polar coordinate of $x$ with respect to the pole $o,$   $S^{2m-1}$ is the unit sphere in $\mathbb R^{2m},$ and 
$\omega_{2m-1}$ is the area of $S^{2m-1}.$
 The  last equality is due to Lemma \ref{asdf}.  Denote by $\mathscr R_{M_\sigma}=-dd^c\log\det(h_{i\bar j})$  the Ricci form of  $M_\sigma,$ then 
 \begin{eqnarray}
 T(r,\mathscr R_{M_\sigma})&=&\int_{r_0}^{r}\frac{dt}{\sigma^{2m-1}(t)}\int_{B_o(t)}\mathscr R_{M_\sigma}\wedge\alpha^{m-1}  \nonumber \\
&=& \frac{1}{2}\int_{B_o(r)}g_r(o,x)s_{M_\sigma}(x)dV(x)-\frac{1}{2}\int_{B_o(r_0)}g_{r_0}(o,x)s_{M_\sigma}(x)dV(x), \label{scalar}
\end{eqnarray}
 where $s_{M_\sigma}$ is the scalar curvature of $M_\sigma,$ see Section 2.1.2.

\noindent {\textbf{Remark.}} When $M_\sigma=\mathbb C^m$ with standard Euclidean metric, we have $R=\infty$ and $\sigma(r)=r.$ 
Since $d\pi^r_o(z)=d^c\log\|z\|^2\wedge \left(dd^c\log\|z\|^2\right)^{m-1},$  the generalized  definition of Nevanlinna's functions agrees with the classical one, see \cite{noguchi, ru}.

 \subsubsection{First Main Theorem}~

In the classical Nevanlinna theory,   Green-Jensen formula \cite{noguchi, ru} deduces  the Nevanlinna's First Main Theorem. 
For a  meromorphic function  defined on  a  complex manifold,  we need  Dynkin formula (see \cite{at, NN, itoo})   which plays the similar  role as Green-Jensen formula. 
 Let's  introduce  a simple version of Dynkin formula which is viewed  as a special case of  the original probabilistic version  via Brownian motion, see, e.g., \cite{at, atsuji, carne, dong, NN, itoo}.

\noindent\textbf{Dynkin formula.} \emph{Let $u$ be a  function of  $\mathscr C^2$-class except at most a polar set of singularities on a Riemannian manifold $M.$ For $0<r_0<r$ or  $0\leq r_0<r$ with $u(o)\not=\infty,$ 
we have 
\begin{eqnarray*}
&&\int_{S_o(r)}u(x)d\pi_o^r(x)-\int_{S_o(r_0)}u(x)d\pi_o^r(x) \\
&=& \frac{1}{2}\int_{B_o(r)}g_r(o, x)\Delta_Mu(x)dV(x)- \frac{1}{2}\int_{B_o(r_0)}g_{r_0}(o, x)\Delta_Mu(x)dV(x)
\end{eqnarray*}
where $B_o(r), S_o(r)$ are   geodesic ball and geodesic sphere of radius $r$ centered at $o$ respectively,  $g_r(o, x)$ is the Green function of $\Delta_M/2$ for 
$B_o(r)$ with pole at $o$ and Dirichlet boundary condition, and $d\pi_o^r$ is the harmonic metric on $S_o(r)$ with respect to $o.$ Here, $\Delta_M u$ should be understood as distributions.}

Particularly, when $M=\mathbb C^m,$  Dynkin formula coincides with Green-Jensen formula (see \cite{noguchi, ru}). 
According to the definition of Nevanlinna's functions and  Dynkin formula, 
we  can easily obtain   the First Main Theorem   as follows 
$$ \text{F. M. T.}  \ \  \ T_{f}(r,L)=m_{f}(r,D)+N_{f}(r,D)+O(1).$$

\subsection{Logarithmic Derivative Lemma}~ 

Let $(M_\sigma, h)$ be a Hermitian model  manifold of complex dimension $m,$ with a pole $o$ and radius $R.$ 
\begin{lemma}[\cite{ru-sibony}]\label{borel}  Let $\gamma$ be an integrable function on $(0, R)$ with $\int_0^R\gamma(r)dr=\infty.$  Let $h$ be a nondecreasing function of $\mathscr C^1$-class
on $(0, R).$ Assume that $\lim_{r\rightarrow R}h(r)=\infty$ and $h(r_0)>0$ for some $r_0\in (0, R).$ Then for every $\delta>0$  
$$h'(r)\leq h^{1+\delta}(r)\gamma(r)$$
holds for all $r\in(0,R)$ outside a set $E_\delta$ with $\int_{E_{\delta}}\gamma(r)dr<\infty.$ In particular, when $R=\infty,$ we can take $\gamma=1.$ Then  for every $\delta>0$ 
$$h'(r)\leq h^{1+\delta}(r)$$
holds for all $r\in(0,\infty)$ outside a set $E_\delta$ of finite Lebesgue measure.
\end{lemma}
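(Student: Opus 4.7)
The plan is to define the exceptional set \emph{directly} as the set where the desired inequality fails, and then show its $\gamma$-measure is finite by reducing to a telescoping integral of $h'/h^{1+\delta}$. Concretely, put
$$E_\delta := \{r\in(0,R)\,:\, h'(r) > h(r)^{1+\delta}\gamma(r)\}.$$
By construction, the inequality $h'(r)\le h(r)^{1+\delta}\gamma(r)$ holds on the complement of $E_\delta$, so only the bound $\int_{E_\delta}\gamma(r)\,dr<\infty$ remains to be established.

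To bound the measure of $E_\delta$, I would split $E_\delta = (E_\delta\cap(0,r_0])\cup(E_\delta\cap(r_0,R))$. On the first piece, the integrability hypothesis on $\gamma$ (locally on the compact subinterval $(0,r_0]$) contributes only a finite amount. On the second piece, since $h$ is nondecreasing and $h(r_0)>0$, we have $h(r)\ge h(r_0)>0$ on $[r_0,R)$, so $h(r)^{1+\delta}$ is well defined and positive there. On $E_\delta$ we may rewrite the defining inequality as $\gamma(r)<h'(r)/h(r)^{1+\delta}$, and then
$$\int_{E_\delta\cap(r_0,R)}\gamma(r)\,dr \;\le\; \int_{r_0}^{R}\frac{h'(r)}{h(r)^{1+\delta}}\,dr \;=\; \Bigl[-\tfrac{1}{\delta}\,h(r)^{-\delta}\Bigr]_{r_0}^{R} \;=\; \frac{1}{\delta\, h(r_0)^{\delta}} \;<\; \infty,$$
where the antiderivative identity uses $h\in\mathscr C^1$, and the vanishing of the upper endpoint uses $h(r)\to\infty$ as $r\to R^-$. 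Adding the two contributions gives $\int_{E_\delta}\gamma(r)\,dr<\infty$, as required.

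The ``in particular'' clause is the special case $R=\infty$, $\gamma\equiv 1$: then $\int_0^R\gamma\,dr=\infty$ is automatic, the hypothesis of local integrability is trivial, and the $\gamma$-measure of $E_\delta$ reduces to its ordinary Lebesgue measure. There is no real obstacle in this proof — the whole argument rests on the one-line telescoping computation of $\int h'/h^{1+\delta}$. The only subtlety worth flagging is that the lower limit of integration must be $r_0$ rather than $0$, since the hypotheses only guarantee positivity of $h$ from $r_0$ onward; this is precisely why the piece $E_\delta\cap(0,r_0]$ has to be absorbed separately via local integrability of $\gamma$.
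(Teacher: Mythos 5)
Your argument is correct and is the standard proof of this Borel-type growth lemma; the paper cites the result from Ru--Sibony and does not reprove it, so there is no in-text proof to compare against. The telescoping computation $\int_{r_0}^R h'(r)/h(r)^{1+\delta}\,dr=\delta^{-1}h(r_0)^{-\delta}$ is the whole content, and you carry it out correctly (it only uses $h'\geq 0$, $h(r)\geq h(r_0)>0$ on $[r_0,R)$, and $h\in\mathscr C^1$). Two small points of hygiene worth fixing: $(0,r_0]$ is \emph{not} a compact subinterval of $(0,R)$, so the finiteness of $\int_0^{r_0}\gamma$ is not ``local integrability on a compact set'' but rather the (implicit) hypothesis that $\gamma$ is integrable near the left endpoint, which is exactly what the paper's concrete choices $\gamma\equiv 1$ and $\gamma(r)=1/(R-r)$ satisfy; and since $h$ may be nonpositive on $(0,r_0)$, the quantity $h(r)^{1+\delta}$ need not be defined there, so it is cleaner to simply declare $(0,r_0]\subset E_\delta$ outright rather than intersect $E_\delta$ with $(0,r_0]$. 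Neither observation affects the substance of the argument.
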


Let $\Gamma$ be a locally integrable function on $M_\sigma.$ Set
  $$E_\Gamma(r)=\int_{S_o(r)}\Gamma(x) d\pi_o^r(x)  , \ \ \ 
 T_{\Gamma}(r)=\int_{r_0}^{r}\frac{dt}{\sigma^{2m-1}(t)}\int_{B_o(t)}\Gamma(x)\alpha^{m}.$$

 We need the following so-called Calculus Lemma
\begin{lemma}\label{CL}  Let  $\gamma$ be an integrable function on $(0, R)$ with $\int_0^R\gamma(r)dr=\infty.$ Let $\Gamma$ be a locally integrable function on $M_\sigma.$ Then for every $\delta>0$
$$E_{\Gamma}(r)\leq \frac{\pi ^m}{\omega_{2m-1}m!}\sigma^{(2m-1)\delta}(r)\gamma^{2+\delta}(r)T^{(1+\delta)^2}_{\Gamma}(r)$$
holds for all $r\in(0,R)$ outside a set $E_\delta$ with $\int_{E_{\delta}}\gamma(r)dr<\infty,$ where $\omega_{2m-1}$ is the area of the unit sphere $S^{2m-1}$ in $\mathbb R^{2m}.$
\end{lemma}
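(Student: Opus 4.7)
The plan is to convert $E_\Gamma(r)$ into a derivative via the coarea formula and then apply the Borel-type growth lemma (Lemma \ref{borel}) twice. Throughout I assume $\Gamma\geq 0$, which is the relevant Nevanlinna setting and ensures the auxiliary functions below are nondecreasing.

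By Lemma \ref{asdf} the harmonic measure is $d\pi_o^r = d\theta_1\cdots d\theta_{2m-1}/\omega_{2m-1}$, while the induced area element on $S_o(r)$ is $dA_r = \sigma^{2m-1}(r)\,d\theta_1\cdots d\theta_{2m-1}$. Hence
$$E_\Gamma(r) \;=\; \frac{1}{\omega_{2m-1}\sigma^{2m-1}(r)}\int_{S_o(r)}\Gamma\,dA_r.$$
Set $F(r) := \int_{B_o(r)}\Gamma\,\alpha^m$. Using $dV = \pi^m\alpha^m/m!$ together with the coarea formula (the distance function from a pole satisfies $|\nabla r|\equiv 1$ on $M_\sigma\setminus\{o\}$), one gets $\int_{S_o(r)}\Gamma\,dA_r = (\pi^m/m!)\,F'(r)$. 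On the other hand $T_\Gamma$ is by definition a $\mathscr{C}^1$ primitive of $F(r)/\sigma^{2m-1}(r)$, so $T_\Gamma'(r)=F(r)/\sigma^{2m-1}(r)$.

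Now I apply Lemma \ref{borel} first to $T_\Gamma$: outside an exceptional set $E_\delta'$ with $\int_{E_\delta'}\gamma\,dr<\infty$,
$$T_\Gamma'(r)\leq T_\Gamma(r)^{1+\delta}\gamma(r),\qquad\text{equivalently}\qquad F(r)\leq \sigma^{2m-1}(r)\gamma(r)T_\Gamma(r)^{1+\delta}.$$
A second application of Lemma \ref{borel}, now to $F$, gives $F'(r)\leq F(r)^{1+\delta}\gamma(r)$ off a set $E_\delta''$ of the same type. Chaining the two bounds,
$$F'(r)\;\leq\;\sigma^{(2m-1)(1+\delta)}(r)\,\gamma^{2+\delta}(r)\,T_\Gamma(r)^{(1+\delta)^2},$$
and substituting into the expression for $E_\Gamma(r)$ yields the stated inequality, with exceptional set $E_\delta := E_\delta'\cup E_\delta''$ still satisfying $\int_{E_\delta}\gamma\,dr<\infty$.

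The main point of care is verifying the hypotheses of Lemma \ref{borel} for $T_\Gamma$ and $F$: nondecreasing $\mathscr{C}^1$-regularity follows from $\Gamma\geq 0$ and the definitions, but one must also know that each blows up as $r\to R$; if either remains bounded the stated estimate is trivial after enlarging the constant, since then $T_\Gamma$ contributes no growth while $E_\Gamma$ is controlled directly by $\int\Gamma$ on compact subsets. Beyond this bookkeeping and the merging of the two exceptional sets, the only routine step is tracking the constant $\pi^m/(\omega_{2m-1}m!)$ through the chain of inequalities.
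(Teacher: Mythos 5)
Your proof is correct and follows essentially the same route as the paper: the paper also computes the derivative identity $\frac{d}{dr}\bigl(\sigma^{2m-1}T_\Gamma'\bigr)=\frac{m!\,\omega_{2m-1}\sigma^{2m-1}}{\pi^m}E_\Gamma$ and then applies Lemma \ref{borel} twice, once to $\sigma^{2m-1}T_\Gamma'$ (your $F$) and once to $T_\Gamma$. Your additional remarks about positivity of $\Gamma$, $\mathscr{C}^1$ regularity, blowup of the auxiliary functions, and the trivial bounded case are a reasonable elaboration of points the paper leaves implicit.
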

\begin{proof} Notice that 
$$\int_{B_o(r)}\Gamma(x)\alpha^{m}=\frac{m!\omega_{2m-1}\sigma^{2m-1}(r)}{\pi^m}\int_0^rdt\int_{S_o(t)}\Gamma(x)d\pi_o^r(x),$$
 then we have  
$$\frac{d}{dr}\Big(\sigma^{2m-1}\frac{dT_\Gamma}{dr}\Big)=\frac{m!\omega_{2m-1}\sigma^{2m-1}}{\pi^m}E_\Gamma.$$
Using  Lemma \ref{borel} twice (first to $\sigma^{2m-1}T'_\Gamma$ and then to $T_\Gamma$), then we  can prove the lemma.
\end{proof}

Let $\psi$ be a meromorphic function on  $M_\sigma.$ 
The norm of the gradient of $\psi$ is defined by
$$\|\nabla_{M_\sigma}\psi\|^2=2\sum_{i,j}h^{i\overline j}\frac{\partial\psi}{\partial z_i}\overline{\frac{\partial \psi}{\partial  z_j}}.$$
 Identify $\psi$  with a meromorphic mapping into $\mathbb P^1(\mathbb C).$  
  The  characteristic function of $\psi$ with respect to the Fubini-Study form $\omega_{FS}$ on  $\mathbb P^1(\mathbb C)$  is defined by
 $$T_{\psi}(r)=\int_{r_0}^{r}\frac{dt}{\sigma^{2m-1}(t)}\int_{B_o(t)}f^*\omega_{FS}\wedge\alpha^{m-1}.$$
Let
  $i:\mathbb C\hookrightarrow\mathbb P^1(\mathbb C)$ be an inclusion, then it induces  a (1,1)-form $i^*\omega_{FS}$ on $\mathbb C.$
 The \emph{Ahlfords characteristic function} of $\psi$  is defined by
 $$\hat T_{\psi}(r)=\int_{r_0}^{r}\frac{dt}{\sigma^{2m-1}(t)}\int_{B_o(t)}f^*(i^*\omega_{FS})\wedge\alpha^{m-1}.$$
Moreover, we  define the \emph{Nevanlinna characteristic function}
$$T(r,\psi)=m(r,\psi)+N(r,\psi),$$
where
$$m(r,\psi)=\int_{S_o(r)}\log^+|\psi(x)|d\pi^r_o(x), \ \ 
N(r,\psi)= \int_{r_0}^r\frac{dt}{\sigma^{2m-1}(t)}\int_{f^*\infty\cap B_o(t)}\alpha^{m-2}.$$
 It is trivial to confirm that  $\hat{T}_\psi(r)\leq T_\psi(r)$ and $T(r,\psi)=\hat{T}_\psi(r)+O(1).$  Thus,  we obtain 
$$T(r,\psi)\leq T_\psi(r)+O(1).$$

  On $\mathbb P^1(\mathbb C),$  take a singular metric
$$\Phi=\frac{1}{|\zeta|^2(1+\log^2|\zeta|)}\frac{\sqrt{-1}}{4\pi^2}d\zeta\wedge d\bar \zeta.$$
A direct computation gives that
\begin{equation}\label{ada}
\int_{\mathbb P^1(\mathbb C)}\Phi=1, \ \ \ 4m\pi\frac{\psi^*\Phi\wedge\alpha^{m-1}}{\alpha^m}=\frac{\|\nabla_{M_\sigma}\psi\|^2}{|\psi|^2(1+\log^2|\psi|)}.
\end{equation}
Set
 $$T_{\psi}(r,\Phi)=\int_{r_0}^{r}\frac{dt}{\sigma^{2m-1}(t)}\int_{B_o(t)}\psi^*\Phi\wedge\alpha^{m-1}.$$
\begin{lemma}\label{oo12} We have
$$T_{\psi}(r,\Phi)\leq T(r,\psi)+O(1).$$
\end{lemma}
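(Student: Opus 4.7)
The strategy is to compare $\Phi$ with the Fubini--Study form $\omega_{FS}$ via a bounded-above potential on $\mathbb{P}^1(\mathbb{C})$, reduce the difference $T_\psi(r,\Phi)-T_\psi(r,\omega_{FS})$ to a boundary integral using Dynkin's formula, and then close with the First Main Theorem.

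Since $\int_{\mathbb{P}^1(\mathbb{C})}\Phi=\int_{\mathbb{P}^1(\mathbb{C})}\omega_{FS}=1$, both positive $(1,1)$-currents represent the class $c_1(\mathcal{O}(1))$, so the $\partial\bar\partial$-lemma supplies an $L^1$-function $u$ on $\mathbb{P}^1(\mathbb{C})$, smooth off $\{0,\infty\}$, with $\Phi-\omega_{FS}=dd^c u$. The crucial property is that $u$ can be taken \emph{bounded above}: near each of the two singular points the density of $\Phi$ dominates that of $\omega_{FS}$, so $\Phi-\omega_{FS}$ is a positive $(1,1)$-current there and $u$ is locally plurisubharmonic, hence upper semicontinuous and locally bounded above. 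Away from $\{0,\infty\}$, $u$ is smooth; compactness of $\mathbb{P}^1(\mathbb{C})$ then forces $\sup_{\mathbb{P}^1(\mathbb{C})}u<\infty$. A short analysis of a radial primitive in fact gives $u(\zeta)=-\tfrac{2}{\pi}\log|\log|\zeta||+O(1)$ near $0$ and $\infty$, i.e., $u$ diverges to $-\infty$ only at the very slow $\log\log$-rate.

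Pulling back, $\psi^*(\Phi-\omega_{FS})=dd^c(u\circ\psi)$ outside the polar set $\psi^{-1}(\{0,\infty\})$. Combining Fubini with the identity $g_r(o,x)=\tfrac{2}{\omega_{2m-1}}\int_{r(x)}^r\tfrac{dt}{\sigma^{2m-1}(t)}$ of Lemma \ref{asdf} gives the general representation $T_\psi(r,\eta)=\tfrac{\omega_{2m-1}}{2}\int_{B_o(r)}\bigl(g_r-g_{r_0}\bigr)\,\psi^*\eta\wedge\alpha^{m-1}$ for any real $(1,1)$-form $\eta$. Applying this with $\eta=\Phi-\omega_{FS}$, converting $dd^c(u\circ\psi)\wedge\alpha^{m-1}=\tfrac{(m-1)!}{4\pi^m}\Delta_{M_\sigma}(u\circ\psi)\,dV$, and then invoking Dynkin's formula (legitimate since $u\circ\psi$ is $\mathscr{C}^2$ outside a polar set) yields
$$T_\psi(r,\Phi)-T_\psi(r,\omega_{FS})=\tfrac{1}{2}\int_{S_o(r)}u\circ\psi\,d\pi_o^r-\tfrac{1}{2}\int_{S_o(r_0)}u\circ\psi\,d\pi_o^{r_0}.$$
Because $d\pi_o^r$ is a probability measure and $u\leq\sup_{\mathbb{P}^1(\mathbb{C})}u<\infty$, the first term is bounded above by $\tfrac{1}{2}\sup u$ while the second is a fixed constant; hence $T_\psi(r,\Phi)-T_\psi(r,\omega_{FS})\leq O(1)$.

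To conclude, the First Main Theorem applied to $L=\mathcal{O}(1)$ and the divisor $D=\{\infty\}$ gives $T_\psi(r,\omega_{FS})=m_\psi(r,D)+N_\psi(r,D)+O(1)$, and comparing the FS proximity function $\log(1/\|s_\infty\|_{FS}^2)=\log(1+|\zeta|^2)=2\log^+|\zeta|+O(1)$ with the definition of $T(r,\psi)$ shows that $T_\psi(r,\omega_{FS})=T(r,\psi)+O(1)$. Combining the two estimates produces the desired $T_\psi(r,\Phi)\leq T(r,\psi)+O(1)$. The main obstacle is the upper-boundedness of $u$; once that is secured, the rest is a routine Dynkin-formula computation. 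That upper bound in turn rests on the $(1+\log^2|\zeta|)^{-1}$ weight in $\Phi$, which keeps $\Phi-\omega_{FS}$ a bona fide positive measure (rather than something containing a Dirac mass of the wrong sign) near the two singular points of $\mathbb{P}^1(\mathbb{C})$.
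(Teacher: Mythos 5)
Your proof is correct, but it follows a genuinely different route from the paper's. The paper's argument is a short Crofton/averaging computation: by Fubini, $T_\psi(r,\Phi) = \int_{\mathbb{P}^1(\mathbb{C})} N_\psi(r,\zeta)\,\Phi$; since the First Main Theorem gives $N_\psi(r,\zeta)\leq T(r,\psi)+O(1)$ and $\int_{\mathbb{P}^1(\mathbb{C})}\Phi=1$, the bound follows immediately upon integrating against $\Phi$. You instead solve $\Phi - \omega_{FS} = dd^c u$ via the $\partial\bar\partial$-lemma, observe that $u$ is bounded above on $\mathbb{P}^1(\mathbb{C})$ because $\Phi$ dominates $\omega_{FS}$ near its two singular points (so $u$ is subharmonic and hence locally bounded above there, and smooth elsewhere on a compact manifold), and then push this through Dynkin's formula to get
$$T_\psi(r,\Phi)-T_\psi(r,\omega_{FS})=\tfrac{1}{2}\int_{S_o(r)}u\circ\psi\,d\pi_o^r + O(1) \leq O(1),$$
closing with $T_\psi(r,\omega_{FS})=T(r,\psi)+O(1)$. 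Both routes are sound and all your constants check out (in particular $\omega_{2m-1}=2\pi^m/(m-1)!$ makes the prefactor exactly $\tfrac12$). The paper's argument is shorter, uses nothing beyond Fubini and the First Main Theorem, and would work for any positive form of total mass at most one irrespective of its cohomology class; your argument requires both the cohomological comparison and the upper-boundedness of the potential, but in exchange it yields the sharper two-sided statement that $T_\psi(r,\Phi)$ and $T_\psi(r,\omega_{FS})$ differ by $\tfrac12\int_{S_o(r)}u\circ\psi\,d\pi_o^r$ up to a constant, rather than only a one-sided bound.
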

\begin{proof} It yields from Fubini theorem that 
\begin{eqnarray*}
T_{\psi}(r,\Phi)&=&\int_{\mathbb P^1(\mathbb C)}\Phi(\zeta)\int_{r_0}^{r}\frac{dt}{\sigma^{2m-1}(t)}\int_{\psi^*\zeta\cap B_o(t)}\alpha^{m-2} \\
&=&\int_{\mathbb P^1(\mathbb C)}N_\psi(r,\zeta)\Phi(\zeta)  \\
&\leq&\int_{\mathbb P^1(\mathbb C)}\big{(}T(r,\psi)+O(1)\big{)}\Phi \\
&=& T(r,\psi)+O(1).
\end{eqnarray*}
\end{proof}
\begin{lemma}\label{999a1}  Let $\gamma$ be an integrable  function on $(0, R)$ with $\int_0^R\gamma(r)dr=\infty.$   
Let  $\psi\not\equiv0$ be a meromorphic function on  $M_\sigma.$ 
Then for every $\delta>0$ 
 \begin{eqnarray*}
 && \int_{S_o(r)}\log^+\frac{\|\nabla_{M_\sigma}\psi(x)\|^2}{|\psi(x)|^2(1+\log^2|\psi(x)|)}d\pi_o^r(x) \\
  &\leq& (1+\delta)^2\log^+ T(r,\psi)+(2+\delta)\log^+\gamma(r)+(2m-1)\delta\log^+\sigma(r)+O(1)
\end{eqnarray*}
holds for all $r\in(0,R)$ outside a set $E_\delta$ with $\int_{E_{\delta}}\gamma(r)dr<\infty.$
\end{lemma}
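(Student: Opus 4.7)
The plan is to follow the standard Nevanlinna-theoretic route via the singular metric $\Phi$ on $\mathbb P^1(\mathbb C)$, combining Jensen's inequality (concavity of $\log$) with the Calculus Lemma and the bound from Lemma \ref{oo12}. Set
$$\Gamma(x)=\frac{\|\nabla_{M_\sigma}\psi(x)\|^2}{|\psi(x)|^2(1+\log^2|\psi(x)|)},$$
which by the identity (\ref{ada}) satisfies $\Gamma\,\alpha^m=4m\pi\,\psi^*\Phi\wedge\alpha^{m-1}$. Since $d\pi_o^r$ is a probability measure on $S_o(r)$, concavity of $\log$ together with $\log^+ x\leq\log(1+x)$ gives
$$\int_{S_o(r)}\log^+\Gamma(x)\,d\pi_o^r(x)\leq \log\!\bigl(1+E_\Gamma(r)\bigr)\leq \log^+ E_\Gamma(r)+O(1),$$
which moves the logarithm outside the spherical average.

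Next I would apply Lemma \ref{CL} to $\Gamma$: outside an exceptional set $E_\delta$ with $\int_{E_\delta}\gamma(r)\,dr<\infty$, one has
$$E_\Gamma(r)\leq \frac{\pi^m}{\omega_{2m-1}\,m!}\,\sigma^{(2m-1)\delta}(r)\,\gamma^{2+\delta}(r)\,T_\Gamma^{(1+\delta)^2}(r).$$
Taking $\log^+$ of both sides yields
$$\log^+ E_\Gamma(r)\leq (1+\delta)^2\log^+ T_\Gamma(r)+(2+\delta)\log^+\gamma(r)+(2m-1)\delta\log^+\sigma(r)+O(1).$$

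Finally, by the definition of $T_\Gamma(r)$ and the relation $\Gamma\,\alpha^m=4m\pi\,\psi^*\Phi\wedge\alpha^{m-1}$, we have $T_\Gamma(r)=4m\pi\,T_\psi(r,\Phi)$, up to a constant coming from the lower limit $r_0$. Lemma \ref{oo12} then yields $T_\psi(r,\Phi)\leq T(r,\psi)+O(1)$, so $\log^+ T_\Gamma(r)\leq \log^+ T(r,\psi)+O(1)$. Assembling the three estimates gives precisely the asserted inequality. The main obstacle is really just bookkeeping: one must track how the double application of the Borel-type Lemma \ref{borel} (hidden inside Lemma \ref{CL}) produces the exponent $(1+\delta)^2$, and confirm that the two exceptional sets can be absorbed into a single $E_\delta$ with the stated integrability.
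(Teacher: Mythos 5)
Your proposal is correct and follows essentially the same route as the paper: Jensen's inequality to move the logarithm outside the spherical average, the Calculus Lemma (Lemma \ref{CL}) applied to $\Gamma$ to produce the $(1+\delta)^2$, $\gamma$, and $\sigma$ terms, then the identity (\ref{ada}) together with Lemma \ref{oo12} to replace $T_\Gamma$ by $T(r,\psi)$. One small remark: at the level of this lemma there is only one exceptional set (the one from the single invocation of Lemma \ref{CL}); the two Borel-type applications you mention are already consolidated inside Lemma \ref{CL}'s statement.
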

\begin{proof} By Jensen inequality
\begin{eqnarray*}
  && \int_{S_o(r)}\log^+\frac{\|\nabla_{M_\sigma}\psi(x)\|^2}{|\psi(x)|^2(1+\log^2|\psi(x)|)}d\pi_o^r(x) \\
   &\leq&  \int_{S_o(r)}\log\Big{(}1+\frac{\|\nabla_{M_\sigma}\psi(x)\|^2}{|\psi(x)|^2(1+\log^2|\psi(x)|)}\Big{)}d\pi_o^r(x) \nonumber \\
    &\leq& \log^+\int_{S_o(r)}\frac{\|\nabla_{M_\sigma}\psi(x)\|^2}{|\psi(x)|^2(1+\log^2|\psi(x)|)}d\pi_o^r(x)+O(1). \nonumber
\end{eqnarray*}
Applying Dykin formula, Lemma  \ref{CL} and (\ref{ada}) to get 
\begin{eqnarray*}
   &&\log^+\int_{S_o(r)}\frac{\|\nabla_{M_\sigma}\psi(x)\|^2}{|\psi(x)|^2(1+\log^2|\psi(x)|)}d\pi_o^r(x)  \\
   &\leq& (1+\delta)^2\log^+\int_{r_0}^{r}\frac{dt}{\sigma^{2m-1}(t)}\int_{B_o(t)}\frac{\|\nabla_{M_\sigma}\psi(x)\|^2}{|\psi(x)|^2(1+\log^2|\psi(x)|)}\alpha^{m} \\ 
   && +(2+\delta)\log^+\gamma(r)+(2m-1)\delta\log^+\sigma(r)+O(1)
   \nonumber \\
   &=& (1+\delta)^2\log^+ T(r,\psi)+(2+\delta)\log^+\gamma(r)+(2m-1)\delta\log^+\sigma(r)+O(1). \nonumber
\end{eqnarray*}
Combining the above, the lemma is proved.
\end{proof}

Define
$$m\left(r,\frac{\|\nabla_{M_\sigma}\psi\|}{|\psi|}\right)=\int_{S_o(r)}\log^+\frac{\|\nabla_{M_\sigma}\psi(x)\|}{|\psi|(x)}d\pi^r_o(x).$$

We have the following Logarithmic Derivative Lemma
\begin{theorem}\label{LDL}   Let $\gamma$ be an integrable  function on $(0, R)$ with $\int_0^R\gamma(r)dr=\infty.$  
 Let  $\psi\not\equiv0$ be a meromorphic function on  $M_\sigma.$ Then for every $\delta>0$ 
\begin{eqnarray*}
    m\left(r,\frac{\|\nabla_{M_\sigma}\psi\|}{|\psi|}\right) 
   &\leq&   
    \frac{2+(1+\delta)^2}{2}\log^+ T(r,\psi)+\frac{(2+\delta)}{2}\log^+\gamma(r) \\  
    && +\frac{(2m-1)\delta}{2}\log^+\sigma(r)+O(1)
\end{eqnarray*}
holds for all $r\in(0,R)$ outside a set $E_\delta$ with $\int_{E_{\delta}}\gamma(r)dr<\infty.$
\end{theorem}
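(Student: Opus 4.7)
The plan is to reduce Theorem \ref{LDL} to Lemma \ref{999a1} by absorbing the extra factor $(1+\log^2|\psi|)$ that distinguishes the integrand of that lemma from $\|\nabla_{M_\sigma}\psi\|^2/|\psi|^2$. Concretely, the starting point is the trivial factorization
\begin{equation*}
\frac{\|\nabla_{M_\sigma}\psi\|^{2}}{|\psi|^{2}}
\;=\;
\frac{\|\nabla_{M_\sigma}\psi\|^{2}}{|\psi|^{2}\bigl(1+\log^{2}|\psi|\bigr)}\cdot\bigl(1+\log^{2}|\psi|\bigr),
\end{equation*}
which after taking $\tfrac12\log^{+}$ gives pointwise
\begin{equation*}
\log^{+}\frac{\|\nabla_{M_\sigma}\psi\|}{|\psi|}
\leq
\frac{1}{2}\log^{+}\frac{\|\nabla_{M_\sigma}\psi\|^{2}}{|\psi|^{2}\bigl(1+\log^{2}|\psi|\bigr)}
+\frac{1}{2}\log^{+}\bigl(1+\log^{2}|\psi|\bigr)+O(1).
\end{equation*}
Integrating against $d\pi_{o}^{r}$ and applying Lemma \ref{999a1} to the first term on the right produces exactly the three summands $\tfrac{(1+\delta)^{2}}{2}\log^{+}T(r,\psi)$, $\tfrac{(2+\delta)}{2}\log^{+}\gamma(r)$ and $\tfrac{(2m-1)\delta}{2}\log^{+}\sigma(r)$, valid outside the prescribed exceptional set.

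It remains to estimate the contribution of $\tfrac12\int_{S_o(r)}\log^{+}(1+\log^{2}|\psi|)\,d\pi_{o}^{r}$, and this is where the extra term $\log^{+}T(r,\psi)$ (which accounts for the difference between the coefficient $(1+\delta)^{2}/2$ in Lemma \ref{999a1} and $(2+(1+\delta)^{2})/2$ in the theorem) will come from. One has the pointwise bound $\log^{+}(1+\log^{2}|\psi|)\leq 2\log^{+}\!\bigl|\log|\psi|\bigr|+O(1)$, and $\bigl|\log|\psi|\bigr|=\log^{+}|\psi|+\log^{+}|1/\psi|$. I would then apply Jensen's (concavity) inequality together with the First Main Theorem in the form $m(r,\psi)+m(r,1/\psi)\leq 2T(r,\psi)+O(1)$ (which follows from the F.\,M.\,T.\ derived in the excerpt) to conclude
\begin{equation*}
\int_{S_o(r)}\log^{+}\!\bigl|\log|\psi|\bigr|\,d\pi_{o}^{r}
\leq
\log^{+}\!\!\int_{S_o(r)}\bigl|\log|\psi|\bigr|\,d\pi_{o}^{r}+O(1)
\leq
\log^{+}T(r,\psi)+O(1),
\end{equation*}
so that $\tfrac12\int_{S_o(r)}\log^{+}(1+\log^{2}|\psi|)\,d\pi_{o}^{r}\leq \log^{+}T(r,\psi)+O(1)$.

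Combining the two estimates yields the desired inequality with coefficients $\tfrac{(1+\delta)^{2}}{2}+1=\tfrac{2+(1+\delta)^{2}}{2}$ in front of $\log^{+}T(r,\psi)$, and matching coefficients $\tfrac{2+\delta}{2}$ and $\tfrac{(2m-1)\delta}{2}$ for the remaining two terms, outside the exceptional set on which Lemma \ref{999a1} fails. The main obstacle is bookkeeping: Lemma \ref{999a1} already contains the non-trivial analytic input (Dynkin formula plus the Calculus Lemma applied to $T_\psi(r,\Phi)$ bounded by $T(r,\psi)$ via Lemma \ref{oo12}), so the remaining work is essentially a clean splitting followed by a standard Jensen/FMT estimate; the only subtlety is to make sure no extra $\delta$-dependence sneaks into the coefficient of $\log^{+}T(r,\psi)$ coming from the second term.
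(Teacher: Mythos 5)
Your proposal is correct and follows essentially the same route as the paper: split off the $(1+\log^2|\psi|)$ factor to invoke Lemma \ref{999a1}, then bound $\tfrac12\int\log^+(1+\log^2|\psi|)\,d\pi^r_o$ by $\log^+T(r,\psi)+O(1)$ via a concavity/Jensen estimate and the First Main Theorem applied to $\psi$ and $1/\psi$, exactly as the paper does. The only cosmetic difference is that you pass through $\log^+|\log|\psi||$ while the paper writes $\log(1+\log^+|\psi|+\log^+|1/\psi|)$, but these yield the identical bound.
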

\begin{proof}  Notice that 
\begin{eqnarray*}
    m\left(r,\frac{\|\nabla_{M_\sigma}\psi\|}{|\psi|}\right)
   &\leq& \frac{1}{2}\int_{S_o(r)}\log^+\frac{\|\nabla_{M_\sigma}\psi(x)\|^2}{|\psi(x)|^2(1+\log^2|\psi(x)|)}d\pi^r_o(x) \\
  && +\frac{1}{2}\int_{S_o(r)}\log^+\big{(}1+\log^2|\psi(x)|\big{)}d\pi^r_o(x) \\
     &\leq&   \frac{1}{2}\int_{S_o(r)}\log^+\frac{\|\nabla_{M_\sigma}\psi(x)\|^2}{|\psi(x)|^2(1+\log^2|\psi(x)|)}d\pi^r_o(x)  \\
  && +\int_{S_o(r)}\log\Big{(}1+\log^+|\psi(x)|+\log^+\frac{1}{|\psi(x)|}\Big{)}d\pi^r_o(x).
\end{eqnarray*}
Lemma \ref{999a1} implies that for every $\delta>0$ 
 \begin{eqnarray*}
 && \frac{1}{2}\int_{S_o(r)}\log^+\frac{\|\nabla_{M_\sigma}\psi(x)\|^2}{|\psi(x)|^2(1+\log^2|\psi(x)|)}d\pi_o^r(x) \\
  &\leq& \frac{(1+\delta)^2}{2}\log^+ T(r,\psi)+\frac{(2+\delta)}{2}\log^+\gamma(r)+\frac{(2m-1)\delta}{2}\log^+\sigma(r)+O(1)
\end{eqnarray*}
holds for all $r\in(0,R)$ outside a set $E_\delta$ with $\int_{E_{\delta}}\gamma(r)dr<\infty.$
Using Jensen inequality, it follows that 
\begin{eqnarray*}
&& \int_{S_o(r)}\log\Big{(}1+\log^+|\psi(x)|+\log^+\frac{1}{|\psi(x)|}\Big{)}d\pi^r_o(x) \\
 &\leq& \log\int_{S_o(r)}\Big{(}1+\log^+|\psi(x)|+\log^+\frac{1}{|\psi(x)|}\Big{)}d\pi^r_o(x) \\
   &\leq& \log \Big{(}m(r,\psi) +m\big(r,\frac{1}{\psi}\big)\Big{)}+O(1)  \\
   &\leq& \log^+ T(r,\psi)+O(1).
\end{eqnarray*}
 Combining the above, we prove  the theorem.
\end{proof}

\subsection{Second Main Theorem}~

This subsection aims to prove the following  Second Main  Theorem
\begin{theorem}\label{main theorem}  Let  $M_\sigma$ be a spherically  symmetric K\"ahler manifold of complex dimension $m,$ with a pole $o$ and radius $R.$    Let $L$ be a positive line bundle over a complex projective manifold $N$ with $\dim_{\mathbb C}N\leq m,$  and  $D\in|L|$  be  of  simple normal crossings.
 Let $f:M_\sigma\rightarrow N$ be a differentiably non-degenerate holomorphic map. Then for every $\delta>0$  
  \begin{eqnarray*}
     && T_{f}(r,L)+T_{f}(r,K_N)+T(r,\mathscr R_{M_\sigma}) \\
     &\leq& \overline{N}_{f}(r,D)+O\big{(}\log^+ T_{f}(r,L)+\log^+\gamma(r)+\delta\log^+\sigma(r)\big{)}
  \end{eqnarray*}
holds for all $r\in(0,R)$ outside a set $E_\delta$ with $\int_{E_{\delta}}\gamma(r)dr<\infty,$ where 
$\gamma$ is an integrable  function on $(0, R)$ such that $\int_0^R\gamma(r)dr=\infty.$  We have two cases$:$

$(a)$ For $R=\infty,$   we  take $\gamma(r)=1.$  Then for every $\delta>0$  
  \begin{eqnarray*}
     && T_{f}(r,L)+T_{f}(r,K_N)+T(r,\mathscr R_{M_\sigma}) \\
     &\leq& \overline{N}_{f}(r,D)+O\big{(}\log^+ T_{f}(r,L)+\delta\log^+\sigma(r)\big{)}
  \end{eqnarray*}
holds for all $r\in(0,\infty)$ outside a set $E_\delta$ of finite Lebesgue measure. 

$(b)$ For $R<\infty,$   we  take $\gamma(r)=\frac{1}{R-r}.$  Then  for every $\delta>0$  
  \begin{eqnarray*}
    && T_{f}(r,L)+T_{f}(r,K_N)+T(r,\mathscr R_{M_\sigma}) \\
     &\leq& \overline{N}_{f}(r,D)+O\Big{(}\log^+ T_{f}(r,L)+\log\frac{1}{R-r}\Big{)}
  \end{eqnarray*}
 holds for all $r\in(0,R)$ outside a set $E_\delta$ with $\int_{E_{\delta}}(R-r)^{-1}dr<\infty.$
\end{theorem}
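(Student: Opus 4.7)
The proof follows the Carlson-Griffiths-King paradigm, with the Dynkin formula of Section 3.1.2 replacing Jensen's formula and Theorem \ref{LDL} replacing the Euclidean Logarithmic Derivative Lemma. Cases (a) and (b) are instances of a single unified statement obtained by specialising the function $\gamma$ of Lemma \ref{CL} to $\gamma(r)=1$ and $\gamma(r)=(R-r)^{-1}$ respectively, so it suffices to prove the general $\gamma$-version.

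\textbf{Singular volume form.} Decompose $D=D_1+\cdots+D_q$ into smooth components meeting transversally, and choose sections $s_j\in H^0(N,L_j)$ with $(s_j)=D_j$ and $L=\sum L_j$, together with smooth Hermitian metrics on each $L_j$ and a smooth volume form $\Omega_0$ on $N$. Following Carlson-Griffiths, set
\[
\Psi=\frac{\Omega_0}{\prod_{j=1}^q\|s_j\|^2\,\log^2(c_j\|s_j\|^{-2})},
\]
with constants $c_j$ large enough that the denominator is bounded away from zero on $N$; then $\Psi$ is integrable on $N$, and a standard Poincar\'e-Lelong computation gives, in the sense of currents on $N$,
\[
-dd^c\log\Psi\;=\;c_1(L)+c_1(K_N)-[D]+\text{(bounded smooth correction)}.
\]

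\textbf{Dynkin identity and boundary estimate.} Setting $n=\dim_{\mathbb C}N\le m$, define on $M_\sigma$ the nonnegative function
\[
\xi=\frac{f^*\Psi\wedge\alpha^{m-n}}{\alpha^m/m!},
\]
which is not identically zero by differentiable non-degeneracy of $f$. Pulling back the previous identity and using $-dd^c\log(\alpha^m/m!)=\mathscr R_{M_\sigma}$ together with $dd^c\log|J_f|^2=[R_f]$ for the ramification divisor $R_f$, one obtains
\[
-dd^c\log\xi\;=\;f^*c_1(L)+f^*c_1(K_N)+\mathscr R_{M_\sigma}-f^*[D]-[R_f]+\text{bounded}.
\]
Apply Dynkin's formula to $\log\xi$ (with the standard $\varepsilon$-approximation around $\operatorname{Supp}f^*D\cup R_f$) and integrate against the Green function of Section 2.3 to get
\[
T_f(r,L)+T_f(r,K_N)+T(r,\mathscr R_{M_\sigma})\;=\;N_f(r,D)-N_{R_f}(r)+\tfrac{1}{2}\!\int_{S_o(r)}\!\log\xi\,d\pi_o^r+O(1).
\]
Because $D$ has simple normal crossings, the ramification estimate $N_{R_f}(r)\ge N_f(r,D)-\overline N_f(r,D)$ converts this into
\[
T_f(r,L)+T_f(r,K_N)+T(r,\mathscr R_{M_\sigma})\;\le\;\overline N_f(r,D)+\tfrac{1}{2}\!\int_{S_o(r)}\!\log\xi\,d\pi_o^r+O(1).
\]
By Jensen's inequality the boundary integral is bounded by $\tfrac{1}{2}\log^+\!\int_{S_o(r)}\xi\,d\pi_o^r+O(1)$; writing $\xi$ in local coordinates on $N$ adapted to $D$ and applying the Calculus Lemma \ref{CL} together with Theorem \ref{LDL} term-by-term yields the bound $O(\log^+T_f(r,L)+\log^+\gamma(r)+\delta\log^+\sigma(r))$ outside an exceptional set $E_\delta$ with $\int_{E_\delta}\gamma\,dr<\infty$, completing the proof.

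\textbf{Main obstacle.} The crux is the final step: expressing the Jacobian-type minors of $df$ appearing in $\xi$ as products of logarithmic derivatives of meromorphic functions whose Nevanlinna characteristic is dominated by $T_f(r,L)$, so that Theorem \ref{LDL} applies uniformly to each factor. Tracking the $\gamma$- and $\sigma$-dependencies introduced by the Calculus Lemma is what produces the exact form of the error term, and the combination of the prescriptions $\gamma(r)=1$ vs.\ $\gamma(r)=(R-r)^{-1}$ with Lemma \ref{borel} accounts for the clean dichotomy between cases (a) and (b) and for the finiteness of the respective exceptional sets.
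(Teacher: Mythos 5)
Your proof follows the same Carlson--Griffiths--King paradigm as the paper, using the same main tools (Dynkin formula in place of Green--Jensen, the Calculus Lemma~\ref{CL}, and the Logarithmic Derivative Lemma~\ref{LDL}), but it takes a recognizably different route at the construction of the singular volume form and the handling of the ramification. The paper uses the \emph{unmodified} form $\Phi=\Omega/\prod_j\|s_j\|^2$, obtains the current \emph{inequality} $dd^c\log\xi\geq f^*c_1(L)-f^*\mathrm{Ric}(\Omega)+\mathscr R_{M_\sigma}-\mathrm{Supp}\,f^*D$ in which the ramification contribution and the normal-crossing multiplicity drop are absorbed silently, and then bounds the boundary integral of $\log\xi$ via the explicit local expansion of $\xi_\lambda$ into logarithmic derivatives. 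You instead use the $\log^2$-regularized form $\Psi$, write an equality involving an explicit ramification divisor $[R_f]$, and then invoke the classical estimate $N_{R_f}(r)\geq N_f(r,D)-\overline N_f(r,D)$. Both are standard Carlson--Griffiths variants, and either yields the stated error term once Lemma~\ref{CL} and Theorem~\ref{LDL} are applied with the prescribed $\gamma$; the paper's choice avoids having to make $\Psi$ integrable and keeps the ramification implicit, while yours trades that for the transparency of the explicit $[R_f]$ bookkeeping.

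Two imprecisions in your write-up are worth flagging, though neither is fatal. First, the claim that $-dd^c\log\Psi=c_1(L)+c_1(K_N)-[D]+(\text{bounded smooth correction})$ is not accurate: the correction $2\sum_j dd^c\log\log(c_j\|s_j\|^{-2})$ coming from the $\log^2$-factor is \emph{not} a bounded smooth form --- it is an unbounded current near $D$. What is true (and what the argument actually needs) is that it is bounded from below by $-\varepsilon\cdot c_1(L)$ for a small $\varepsilon$, and that its pullback contributes only $O(\log^+T_f(r,L))$ after applying Lemma~\ref{999a1}/Theorem~\ref{LDL}; indeed, if the correction were genuinely a bounded form you would not need the Logarithmic Derivative Lemma at all, so the claim is internally inconsistent with the rest of your argument. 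Second, the signs in your two displayed current identities do not cohere with the boxed Dynkin identity that follows them (writing $dd^c\log\xi$ with $+[R_f]$, or fixing the other signs, makes them match); the compensating errors happen to land on the correct final inequality, but as written the intermediate steps do not follow. Once those are tidied up, your proof is a correct alternative derivation of Theorem~\ref{main theorem}.
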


We first give several  consequences  before proving Theorem \ref{main theorem}.

\noindent{\textbf{1. Three classical  consequences}}

$(a)$ $M_\sigma=\mathbb C^m$ (with standard Euclidean metric)

The case implies that  $R=\infty$ and $\sigma(r)=r.$  Then 
 $$T_{f}(r,L)=\int_{r_0}^{r}\frac{dt}{r^{2m-1}(t)}\int_{B_o(t)}f^*c_1(L, h_L)\wedge\alpha^{m-1},$$
  which coincides with the classical  characteristic function.
Since $\mathbb C^m$ has sectional curvature 0,  then  conclusion $(a)$ in Theorem \ref{main theorem} derives immediately the classical result of 
Carlson-Griffiths-King (see \cite{gri, gri1}) as follows
\begin{cor}[Carlson-Griffiths-King]\label{cgk}   Let $L$ be a positive line bundle over a complex projective manifold $N$ with $\dim_{\mathbb C}N\leq m,$  and  $D\in|L|$  be  of  simple normal crossings.
 Let $f: \mathbb C^m\rightarrow N$ be a differentiably non-degenerate holomorphic map. Then for every $\delta>0$  
  \begin{eqnarray*}
      T_{f}(r,L)+T_{f}(r,K_N)
     &\leq& \overline{N}_{f}(r,D)+O\big{(}\log^+ T_{f}(r,L)+\delta\log^+r\big{)}
  \end{eqnarray*}
holds for all $r\in(0,\infty)$ outside a set $E_\delta$ of finite Lebesgue measure. 
\end{cor}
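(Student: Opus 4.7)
The plan is to derive this as a direct specialization of Theorem \ref{main theorem}(a). First I would verify that $\mathbb{C}^m$ with its standard Euclidean metric fits the spherically symmetric K\"ahler framework: taking $o$ to be the origin and $r(z)=\|z\|$, the induced metric on the sphere $S_o(r)$ is $r^2 d\theta^2$, so the profile function is $\sigma(r)=r$ on $(0,R)$ with $R=\infty$. In particular, $\sigma(0)=0$ and $\sigma'(0)=1$, so this is a legitimate model in the sense of Section 2.2, and the manifold is geodesically complete and non-compact.

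Next I would identify the curvature term. The standard Hermitian metric $h_{i\bar j}=\delta_{ij}$ on $\mathbb{C}^m$ has $\det(h_{i\bar j})\equiv 1$, so by the definition $\mathscr{R}_{M_\sigma}=-dd^c\log\det(h_{i\bar j})=0$. Consequently
\[
T(r,\mathscr{R}_{M_\sigma})=\int_{r_0}^{r}\frac{dt}{t^{2m-1}}\int_{B_o(t)}0\wedge\alpha^{m-1}=0,
\]
and the scalar curvature term vanishes identically.

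With these reductions in place, I would invoke Theorem \ref{main theorem}(a) with $\gamma(r)\equiv 1$ and the given divisor $D\in|L|$ and differentiably non-degenerate map $f$. The hypothesis $\dim_{\mathbb C} N\le m$ matches, and the exceptional set $E_\delta$ produced by the theorem has finite Lebesgue measure since $\int_0^\infty\gamma(r)\,dr$ is integrated against Lebesgue measure in the weak Borel-lemma sense. Substituting $T(r,\mathscr R_{M_\sigma})=0$ and $\sigma(r)=r$ into the conclusion of Theorem \ref{main theorem}(a) gives exactly
\[
T_f(r,L)+T_f(r,K_N)\le \overline{N}_f(r,D)+O\bigl(\log^+T_f(r,L)+\delta\log^+ r\bigr)
\]
outside a set of finite Lebesgue measure, which is the desired statement.

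There is essentially no obstacle: the corollary is a direct substitution into Theorem \ref{main theorem}(a). The only points that require any care are checking that the standard Euclidean data on $\mathbb{C}^m$ give $\sigma(r)=r$ and $\mathscr{R}_{M_\sigma}=0$, and that the generalized characteristic function in Theorem \ref{main theorem} agrees, up to the usual normalization, with the classical Carlson-Griffiths-King characteristic (this was already noted in the Remark following Lemma \ref{oo12} for the case of $\mathbb{P}^1(\mathbb C)$, and the argument extends verbatim to a general positive line bundle $L\to N$ via $d\pi_o^r=d^c\log\|z\|^2\wedge(dd^c\log\|z\|^2)^{m-1}$ on the sphere).
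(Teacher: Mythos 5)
Your proof is correct and takes essentially the same approach as the paper: identify $M_\sigma=\mathbb C^m$ with $R=\infty$, $\sigma(r)=r$, and $\mathscr R_{M_\sigma}=0$, then substitute directly into Theorem \ref{main theorem}(a) with $\gamma\equiv 1$. The only trivial imprecision is that the remark identifying the generalized characteristic function with the classical one appears at the end of Section 3.1.1 (not after Lemma \ref{oo12}) and concerns the domain $\mathbb C^m$ in general rather than the target $\mathbb P^1(\mathbb C)$; this does not affect the argument.
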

More generalizations of Corollary \ref{cgk}
were done by Sakai \cite{Sakai} in terms of Kodaira dimension
 and  by Shiffman \cite{Shiff} in the case of the singular divisor, see also  \cite{dong, green, hu, Lang, Nchi, Noguchi, ru1, stoll, wong}.

$(b)$ $M_\sigma= \mathbb B^m$ (unit  ball of  complex dimension $m$ with standard Euclidean  metric)

The case implies that  $R=1, \sigma(r)=r$ and $\mathscr R_{\mathbb B^m}=0.$ We also have
 $$T_{f}(r,L)=\int_{r_0}^{r}\frac{dt}{r^{2m-1}(t)}\int_{B_o(t)}f^*c_1(L, h_L)\wedge\alpha^{m-1},$$
 which agrees with the classical  characteristic function.
By Theorem \ref{main theorem} $(b)$,  it  yields that
\begin{cor}\label{ccc}   Let $L$ be a positive line bundle over a complex projective manifold $N$ with $\dim_{\mathbb C}N\leq m,$  and  $D\in|L|$  be  of  simple normal crossings.
 Let $f: \mathbb B^m\rightarrow N$ be a differentiably non-degenerate holomorphic map, where $\mathbb B^m$ is the unit ball with standard Euclidean metric. Then for every $\delta>0$  
  \begin{eqnarray*}
      T_{f}(r,L)+T_{f}(r,K_N)
     &\leq& \overline{N}_{f}(r,D)+O\Big{(}\log^+ T_{f}(r,L)+\log\frac{1}{1-r}\Big{)}
  \end{eqnarray*}
holds for all $r\in(0,1)$ outside a set $E_\delta$ with $\int_{E_{\delta}}(1-r)^{-1}dr<\infty.$
\end{cor}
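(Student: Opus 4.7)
My plan is to transplant the Carlson-Griffiths-King scheme of proof to the spherically symmetric K\"ahler setting, with Dynkin's formula playing the role of Green-Jensen, the Calculus Lemma (Lemma \ref{CL}) replacing the classical Borel-type growth lemma, and Theorem \ref{LDL} supplying the Logarithmic Derivative estimate. Cases $(a)$ and $(b)$ are the two specializations $\gamma\equiv 1$ and $\gamma=1/(R-r)$ of a common argument, so I would prove the general statement once and then read off the two corollaries.

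First I would build the usual Carlson-Griffiths singular volume form on $N$. Write $D=D_1+\cdots+D_q$ with smooth $D_j$ meeting only in normal crossings. Equip each $[D_j]$ with a smooth Hermitian metric whose canonical section is $s_j$, and choose a smooth positive volume form $\Omega$ on $N$, regarded as a metric $\Omega^{-1}$ on $K_N$. With a constant $c_0>0$ chosen so that $\|s_j\|^2\le e^{-1}$ everywhere, set
$$\Psi=\frac{c_0\,\Omega}{\prod_{j=1}^q\|s_j\|^2(\log\|s_j\|^2)^2},$$
an integrable singular volume form on $N$. A Poincar\'e-Lelong computation then yields
$$\tfrac{1}{2}dd^c\log\frac{f^*\Psi}{\alpha^m}=f^*c_1(L,h_L)+f^*c_1(K_N,\Omega^{-1})+\mathscr{R}_{M_\sigma}-[f^*D]+(\text{smooth log-log term}),$$
and applying Dynkin's formula to $\log(f^*\Psi/\alpha^m)$ against the Green function $g_r(o,x)$ gives the master identity
$$\tfrac{1}{2}\!\int_{S_o(r)}\!\log\tfrac{f^*\Psi}{\alpha^m}\,d\pi_o^r=T_f(r,L)+T_f(r,K_N)+T(r,\mathscr{R}_{M_\sigma})-N_f(r,D)-S(r)+O(1),$$
where $S(r)$ collects the log-log proximity contributions $\sum_j\!\int_{S_o(r)}\!\log\log^2\|s_j\circ f\|^{-2}d\pi_o^r$. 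By Jensen's inequality and the First Main Theorem, $S(r)=O(\log^+T_f(r,L))$. To pass from $N_f(r,D)$ to $\overline N_f(r,D)$ I would invoke the standard Jacobian comparison: since $f$ is differentiably non-degenerate and $\dim N\le m$, the holomorphic Jacobian of $f$ has a well-defined zero divisor $R_f$ satisfying $N_f(r,D)-\overline N_f(r,D)\le N(r,R_f)$, and a Poincar\'e-Lelong computation absorbs $N(r,R_f)$ into $T_f(r,K_N)+T(r,\mathscr R_{M_\sigma})$.

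Finally I would bound the left-hand side from above. Concavity of $\log$ gives
$$\int_{S_o(r)}\log\frac{f^*\Psi}{\alpha^m}\,d\pi_o^r\le\log\int_{S_o(r)}\frac{f^*\Psi}{\alpha^m}\,d\pi_o^r,$$
and Lemma \ref{CL} applied to $\Gamma=f^*\Psi/\alpha^m$ converts the right side into an expression in $T_\Gamma(r)=\int_{r_0}^r\sigma^{1-2m}(t)\int_{B_o(t)}f^*\Psi\wedge\alpha^{m-1}\,dt$ plus the error term $(2+\delta)\log^+\gamma(r)+(2m-1)\delta\log^+\sigma(r)+O(1)$ that drives the conclusion. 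The remaining task is to bound $\log^+T_\Gamma(r)$ by $O(\log^+T_f(r,L))$, and this is done by applying Theorem \ref{LDL} to each meromorphic function $\|s_j\circ f\|^2\colon M_\sigma\to\mathbb{P}^1(\mathbb{C})$ to transform the log-log singularities of $\Psi$ into acceptable contributions controlled by $T_f(r,L)$ through the First Main Theorem.

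The main obstacle I anticipate is the last step: producing the estimate $\log^+T_\Gamma(r)\le O(\log^+T_f(r,L)+\log^+\gamma(r)+\delta\log^+\sigma(r))$ without falling into a circular bound. This requires a careful, systematic use of Theorem \ref{LDL} in combination with the two-step application of Lemma \ref{borel} already implicit in Lemma \ref{CL}, precisely mirroring the proof strategy of Theorem \ref{LDL}. Once this bound is secured, assembling it with the master identity and the ramification reduction gives the desired inequality, and parts $(a)$, $(b)$ of the theorem drop out by the announced choices of $\gamma$.
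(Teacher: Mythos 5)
The paper's own proof of Corollary~\ref{ccc} is a one-line specialization of Theorem~\ref{main theorem}(b): for $\mathbb B^m$ with the Euclidean metric one has $R=1$, $\sigma(r)=r$ (so $\delta\log^+\sigma(r)=0$ on $(0,1)$), $\gamma(r)=1/(1-r)$, and $\mathscr R_{\mathbb B^m}=0$, and these observations collapse the general error term to $O(\log^+T_f(r,L)+\log\frac{1}{1-r})$. Your plan instead re-derives the whole Second Main Theorem, which is legitimate in principle, but your outline omits exactly this specialization step (you never note $\sigma=r$ or $\mathscr R_{\mathbb B^m}=0$), and the re-derivation itself has gaps.

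Two steps as written would fail. First, you propose applying the Logarithmic Derivative Lemma (Theorem~\ref{LDL}) to ``each meromorphic function $\|s_j\circ f\|^2$'', but $\|s_j\circ f\|^2$ is a smooth nonnegative \emph{real} function (the Hermitian norm-squared), not a meromorphic map to $\mathbb P^1(\mathbb C)$, so Theorem~\ref{LDL} does not apply to it. The paper avoids this by working with the bare singular form $\Phi=\Omega/\prod\|s_j\|^2$ (no $\log$--$\log$ factor), localizing on a finite cover $\{U_\lambda\}$, and applying the LDL only to the genuinely meromorphic functions $f_{\lambda k}=w_{\lambda k}\circ f$; you would need that localization. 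Second, your passage from $N_f(r,D)$ to $\overline N_f(r,D)$ via a scalar ``holomorphic Jacobian'' $R_f$ is not available when $n=\dim N<m$, and the claim that one can ``absorb $N(r,R_f)$ into $T_f(r,K_N)+T(r,\mathscr R_{M_\sigma})$'' is not a proof of the needed inequality (the signs and the non-equidimensional ramification divisor both require care). The paper sidesteps ramification entirely: the current inequality $dd^c\log\xi\geq f^*c_1(L,h_L)-f^*{\rm Ric}(\Omega)+\mathscr R_{M_\sigma}-{\rm Supp}\,f^*D$ already produces the reduced divisor ${\rm Supp}\,f^*D$, because the higher multiplicities and the degeneracies from wedging with $\alpha^{m-n}$ appear as positive currents that are simply dropped. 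Supplying these two ingredients --- localization to meromorphic coordinate pullbacks for the LDL, and a direct current-theoretic route to $\overline N_f$ --- is exactly what you are missing relative to the paper's argument.
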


$(c)$ $M_\sigma= \mathbb H$ or $\mathbb D$ (Poincar\'e upper half-plane or Poincar\'e disc)

$\mathbb H$ and $\mathbb D$  are  two  representative  models in hyperbolic geometry,  marking many essential  differences from Euclidean geometry.  It is important to study the value distribution of a meromorphic function on Poincar\'e models, which  provides an effective  tool to investigate 
the modular functions
\begin{equation*}
    g(\tau)=\sum_{n\geq m}c_ne^{2\pi i n\tau}
\end{equation*}
on $ \mathbb H$, where $g$ is called a modular (resp. cusp)  form if $m=0$ (resp. $m=1$ ), see, e.g., \cite{hu, hu1}.

In what follows, one  establishes a  Second Main Theorem of meromorphic functions on Poincar\'e models in the sense of hyperbolic metric.
When $M_\sigma= \mathbb H$ or $\mathbb D$ with Poincar\'e metric, we have  $R=\infty$ and $\sigma(r)=\sinh r.$ Let 
 $f$ be a meromorphic function on  $\mathbb H$ or $\mathbb D.$ In this situation, we define the  Ahlfords characteristic function
  \begin{eqnarray*}
  T_{f}(r)&=&\int_{r_0}^{r}\frac{dt}{\sinh t}\int_{B_o(t)}dd^c\log(1+|f(x)|^2) \\
  &=& \frac{1}{4}\int_{B_o(r)}g_r(o, x)\Delta\log(1+|f(x)|^2dV(x) \\
  &&-\frac{1}{4}\int_{B_o(r_0)}g_{r_0}(o, x)\Delta\log(1+|f(x)|^2dV(x)
  \end{eqnarray*}
  where $\Delta$ is the Laplace-Beltrami operator on  $\mathbb H$ or $\mathbb D.$
  Adopt the spherical distance  $\|\cdot , \cdot\|$ on $\mathbb P^1(\mathbb C).$ By definition,  the proximity function is that 
    \begin{eqnarray*}
m_{f}(r, a)&=& \int_{S_o(r)}\log \frac{1}{\|f(x), a\|}d\pi_o^{r}(x)   \\ 
&=& \frac{1}{2\pi}\int_{0}^{2\pi}\log \frac{1}{\|f(r, \theta), a\|}d\theta
  \end{eqnarray*}
for any point $a\in \overline\CC,$ where $(r, \theta)$ is the polar coordinate of $x$ with respect to the pole  $o.$  According to the definition (see Section 2.1.1), the counting function is defined by 
$$N_{f}(r, a) = \int_{r_0}^r\frac{n_f(t, a)}{\sinh t}dt$$ 
for a point $a\in \overline\CC,$ where $n_f(r, a)$ denotes the number of the zeros of $f-a$ in $B_o(r)$ counting multiplicities.
By Dynkin formula, we have the First Main Theorem  
 $$T_{f}(r)=m_{f}(r, a)+N_{f}(r, a)+O(1).$$
 For the sake of intuition of $T_f(r)$, we introduce the Nevanlinna characteristic function
 $$T(r, f):=m(r, f)+N(r, f),$$
 where
 $$m(r, f)=\frac{1}{2\pi}\int_{0}^{2\pi}\log^+|f(r, \theta)|d\theta, \ \  N(r, f)= \int_{r_0}^r\frac{n_f(t, \infty)}{\sinh t}dt.$$
Note that  $$m_f(r, \infty)=m(r, f)+O(1), \ \ N_f(r, \infty)=N(r, f),$$ hence 
 $$T_f(r)=T(r, f)+O(1).$$
 Namely, 
 $$T_f(r)=\frac{1}{2\pi}\int_{0}^{2\pi}\log^+|f(r, \theta)|d\theta+ \int_{r_0}^r\frac{n_f(t, \infty)}{\sinh t}dt+O(1).$$
 
  Let $\omega_{FS}=dd^c\log\|w\|^2$ be the Fubini-Study form on $\mathbb P^1(\mathbb C),$ where $w=[w_0: w_1]$ is the homogeneous  coordinate of   $\mathbb P^1(\mathbb C).$
 Let $a_1, \cdots, a_q$ be different points in $\mathbb P^1(\mathbb C),$ and  
 regard $f=f_1/f_0=[f_0:f_1]$ as a holomorphic map into $\mathbb P^1(\mathbb C).$   
 Theorem \ref{main theorem} $(a)$ gives  that 
$$ (q-2)T_{f, \omega_{FS}}(r)+T_{\mathscr R}(r) 
     \leq \sum_{j=1}^q\overline{N}_{f,a_j}(r)+O\big{(}\log^+ T_{f, \omega_{FS}}(r)+\delta\log^+\sinh r\big{)},
$$
which implies that 
$$ (q-2)T_{f}(r)+T_{\mathscr R}(r) 
     \leq \sum_{j=1}^q\overline{N}_{f}(r, a_j)+O\big{(}\log^+ T_{f}(r)+\delta\log^+\sinh r\big{)}. 
$$
 As noted in Sections 2.2 and 2.3,  
  $$Area(S_o(r))=2\pi\sinh r, \ \ \  g_r(o, x)=\frac{1}{\pi}\log\frac{(e^r-1)(e^{r(x)}+1)}{(e^r+1)(e^{r(x)}-1)}.$$
Since $s_{\mathbb H}=s_{\mathbb D}=-1,$ then a direct computation leads to 
  \begin{eqnarray*}
T(r,\mathscr R) &=& \frac{1}{2}\int_{B_o(r)}g_r(o,x)s(x)dV(x)-\frac{1}{2}\int_{B_o(r_0)}g_{r_0}(o,x)s(x)dV(x)  \\
&=&   -\frac{1}{2}\int_{0}^rdt\int_{S_o(t)}g_r(o,x)dA_t(x) +\frac{1}{2}\int_{0}^{r_0}dt\int_{S_o(t)}g_{r_0}(o,x)dA_t(x)   \\
&=& -\int_{0}^r\log\frac{(e^r-1)(e^{t}+1)}{(e^r+1)(e^{t}-1)}\cdot \sinh t dt \\
&&+\int_{0}^{r_0}\log\frac{(e^{r_0}-1)(e^{t}+1)}{(e^{r_0}+1)(e^{t}-1)}\cdot \sinh t dt \\
&\geq& -r+O(1),
  \end{eqnarray*}
 which implies that 
 $$-T(r,\mathscr R)\leq r+O(1).$$
  Moreover, 
 $$\log^+\sinh r\leq r+O(1).$$
Therefore, we conclude that  
\begin{cor}\label{ccc1}   
 Let $f$ be a  nonconstant meromorphic function on $\mathbb H$  or  $\mathbb D$, where $\mathbb H, \mathbb D$ are the Poincar\'e upper half-plane and Poincar\'e disc respectively. 
 Let $a_1, \cdots, a_q$ be distinct points in $\overline\CC$
 Then for every $\delta>0$  
  \begin{eqnarray*}
      (q-2)T_{f}(r)
     &\leq& \sum_{j=1}^q\overline{N}_{f}(r, a_j)+O\big{(}\log^+ T_{f}(r)+ r\big{)}
  \end{eqnarray*}
holds for all $r\in(0,\infty)$ outside a set $E_\delta$ of finite Lebesgue measure. 
\end{cor}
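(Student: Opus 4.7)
The strategy is to invoke the general Second Main Theorem (Theorem \ref{main theorem}(a)) for $f$ regarded as a holomorphic map from $M_\sigma=\mathbb{H}$ (or $\mathbb{D}$) into $\mathbb{P}^1(\mathbb{C})$, and then convert the curvature term $T(r,\mathscr{R})$ produced on the left-hand side into the simple remainder $O(r)$ using the explicit Green function computed in Example 2.

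First, I would observe that $\mathbb{H}$ and $\mathbb{D}$ with the Poincaré metric are spherically symmetric Kähler manifolds of complex dimension $m=1$, with pole $o$, radius $R=\infty$, and profile $\sigma(r)=\sinh r$. Take $N=\mathbb{P}^1(\mathbb{C})$, $L=\mathcal{O}(1)$, and $D=a_1+\cdots+a_q\in|\mathcal{O}(q)|$, which is of simple normal crossings since the $a_j$ are distinct. Because $L$ is positive, $\dim_{\mathbb{C}}N=1\le m$, and a nonconstant meromorphic function is automatically differentiably non-degenerate when $m=1$, Theorem \ref{main theorem}(a) applies. Using $K_{\mathbb{P}^1}\cong\mathcal{O}(-2)$ we have $T_f(r,\mathcal{O}(q))+T_f(r,K_{\mathbb{P}^1})=(q-2)T_f(r)+O(1)$, so the theorem yields, for every $\delta>0$,
\begin{equation*}
(q-2)T_f(r)+T(r,\mathscr{R})\le\sum_{j=1}^{q}\overline{N}_f(r,a_j)+O\bigl(\log^{+}T_f(r)+\delta\log^{+}\sinh r\bigr)
\end{equation*}
outside an exceptional set of finite Lebesgue measure.

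Second, I must bound $-T(r,\mathscr{R})$ from above. Since the Poincaré metric has constant scalar curvature $s_{\mathbb{H}}=s_{\mathbb{D}}=-1$, formula (\ref{scalar}) reduces $T(r,\mathscr{R})$ to $-\tfrac{1}{2}\int_{B_o(r)}g_r(o,x)\,dV(x)$ modulo an $O(1)$ term. Rewriting this integral in polar coordinates, substituting the explicit Green function from (\ref{qqq3}) and the area element $dA_t=2\pi\sinh t\,dt$, and performing an elementary one-variable integration gives $T(r,\mathscr{R})\ge-r+O(1)$, equivalently $-T(r,\mathscr{R})\le r+O(1)$.

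Third, I would combine the two estimates: fixing any $\delta>0$, moving the curvature term across, and using the elementary bound $\delta\log^{+}\sinh r\le\delta r+O(1)=O(r)$, one obtains
\begin{equation*}
(q-2)T_f(r)\le\sum_{j=1}^{q}\overline{N}_f(r,a_j)+O\bigl(\log^{+}T_f(r)+r\bigr)
\end{equation*}
outside a set of finite Lebesgue measure, which is the asserted inequality. The only substantive calculation is the explicit evaluation of the Green-weighted integral in Step 2; since both $\sigma$ and $g_r$ are known in closed form, this is mechanical and presents no conceptual obstacle. The whole argument is really a specialization of Theorem \ref{main theorem} to the hyperbolic one-dimensional case.
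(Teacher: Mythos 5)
Your proposal is correct and follows essentially the same route as the paper: specialize Theorem \ref{main theorem}(a) to $m=1$, $N=\mathbb{P}^1(\mathbb{C})$, $\sigma(r)=\sinh r$, use $K_{\mathbb{P}^1}\cong\mathcal{O}(-2)$ to produce the factor $(q-2)$, and then absorb both $-T(r,\mathscr{R})$ (via the constant scalar curvature $-1$, formula (\ref{scalar}), and the explicit Green function (\ref{qqq3})) and $\delta\log^{+}\sinh r$ into the $O(r)$ error term. One small slip: since $D=a_1+\cdots+a_q$ has degree $q$, you must take $L=\mathcal{O}(q)$ rather than $\mathcal{O}(1)$ as you first wrote, which is in fact what your subsequent identity $T_f(r,\mathcal{O}(q))+T_f(r,K_{\mathbb{P}^1})=(q-2)T_f(r)+O(1)$ uses.
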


\noindent\textbf{Remark.}  In fact, Corollary \ref{ccc1} is equivalent to the  case  where  $m=1$  and $N=\mathbb P^1(\mathbb C)$ in Corollary \ref{ccc}, i.e., the following Second Main Theorem  
$$      (q-2)T_{f}(r)
     \leq \sum_{j=1}^q\overline{N}_{f}(r, a_j)+O\Big{(}\log^+ T_{f}(r)+\log\frac{1}{1-r}\Big{)}.$$ 
To see this equivalence, we just need to compare the Second Main Theorem for  $\mathbb D$ under the  Poincar\'e metric and  Euclidean metric. 
To avoid confusion, denote by $r, \tilde r$  the geodesic radius under  the Poincar\'e metric and   
Euclidean metric respectively,   by $r(x), \tilde r(x)$ the Riemannian distance functions under  the Poincar\'e metric and   
Euclidean metric respectively.  Similarly,   denote by $g_r(o, x), \tilde g_{\tilde r}(o, x)$ as well  as $T_f(r), \tilde T_f(\tilde r)$  the Green functions and characteristic functions under the metrics.

Firstly, we  compare the main error terms, i.e.,  $O(r)$ and $O(-\log(1-\tilde r)).$
Take  $o$ as the coordinate origin of $\mathbb D$.  By the relation 
\begin{equation}\label{ppp1}
r(x)=\log\frac{1+\tilde r(x)}{1-\tilde r(x)},
\end{equation}
we see that  $r$ corresponds to 
$$\log \frac{1+\tilde r}{1-\tilde r}=\log\frac{1}{1-\tilde r}+O(1)$$
due to $\tilde r<1.$ Thus, the two error terms are equivalent.

Finally, we compare the characteristic functions, i.e., $T_f(r)$ and $\tilde T_f(\tilde r).$ The similar discussions can be applied to the comparisons for 
counting functions and proximity functions. Under the Euclidean metric,  the Green function is written as
$$\tilde g_{\tilde r}(o, x)=\frac{1}{\pi}\log\frac{\tilde r}{|x|}=\frac{1}{\pi}\log\frac{\tilde r}{\tilde r(x)}.$$
which corresponds to the Green function $g_r(o,x)$ under the   Poincar\'e metric since (\ref{qqq3}) and (\ref{ppp1}). Notice that
$$\Delta\log(1+|f|^2)dV=\tilde\Delta\log(1+|f|^2)d\tilde V,$$
where $\Delta, \tilde\Delta$ denote   Laplace-Beltrami operators under the  Poincar\'e metric and Euclidean metric respectively,  and $dV, d\tilde V$ denote  
volume elements under the  Poincar\'e metric and Euclidean metric respectively. By the definition of characteristic function, we see that they are a match.
Hence,  the two Second Main Theorems are actually equivalent under the two  metrics. 

\noindent{\textbf{2. Some other  consequences}}

 Let $M$ be a Riemannian manifold with a point  $o\in M.$ We establish a polar coordinate system $(o, r, \theta).$  For any $x=(r, \theta)\in M$ such that $x\not\in Cut^*(o),$ denote by $Ric_o(x)$ the Ricci curvature of $M$ at $x$ in the direction $\partial/\partial r.$ Let $\omega=(\partial/\partial r, X)$ be any pair of tangent vectors from $T_xM,$ where $X$ is a unit vector orthogonal to $\partial/\partial r.$ Indeed, let $K_\omega(x)$ be  the sectional curvature of $M$ at $x$ along the 2-section determined by $\omega.$
For a $d$-dimensional spherically symmetric  manifold $M_\sigma,$ a direct computation \cite{bi} yields that 
\begin{equation}\label{sss}
Ric_o(x)=-(d-1)\frac{\sigma''(r)}{\sigma(r)}, \ \ \ K_\omega(x)=-\frac{\sigma''(r)}{\sigma(r)}
\end{equation}
for all $x=(r, \theta)\in M_\sigma\setminus o.$

\begin{lemma}[Ichihara, \cite{IK, I2}]\label{lem1}  Let $\psi$ be a smooth positive function on $(0, \infty)$ such that 
$$\psi(0)=0, \ \ \ \psi'(0)=1.$$
Let $M$ be a $d$-dimensional geodesically complete, non-compact manifold, and $o\in M.$ Set 
$$S(r)=\omega_{d-1}\psi^{d-1}(r),$$
 where $\omega_{d-1}$ is the area of $S^{d-1}.$ Then
 
 $(a)$ If for all $x=(r, \theta)\not\in Cut^*(o)$ 
 $$Ric_o(x)\geq-(d-1)\frac{\psi''(r)}{\psi(r)}, \ \ \ \int_1^\infty\frac{dr}{S(r)}=\infty,$$
 then $M$ is parabolic. 
 
 $(b)$ If for all $x=(r, \theta)\not=o$ and all $\omega$
 $$K_\omega(x)\geq-\frac{\psi''(r)}{\psi(r)}, \ \ \ \int_1^\infty\frac{dr}{S(r)}<\infty,$$
 then $M$ is non-parabolic.
\end{lemma}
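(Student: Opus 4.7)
The approach is pure comparison geometry: reduce the problem on $M$ to the rotationally symmetric model $M_\psi$ by comparing the Laplacian of the distance function $r(x)=\mathrm{dist}(o,x)$ against its model value $(d-1)\psi'/\psi = S'(r)/S(r)$. On the model itself, parabolicity is equivalent to $\int_{1}^{\infty}dr/S(r)=\infty$: the radial ODE $\Delta_\psi u = u''+(S'/S)u' = (1/S)(Su')'$ shows that $G(r)=\int_{r}^{\infty}dt/S(t)$ is a positive harmonic function on $M_\psi\setminus\{o\}$ tending to $0$ at infinity precisely when the integral converges, giving the Green function in that case and forcing every positive superharmonic function to be constant in the divergent case. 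Once the one-sided comparison between $\Delta r$ on $M$ and $S'/S$ is available, one transfers these conclusions to $M$.

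For part (a), under the radial Ricci lower bound, the Laplacian comparison theorem of Greene--Wu gives
\[
\Delta r \;\leq\; (d-1)\frac{\psi'(r)}{\psi(r)} \;=\; \frac{S'(r)}{S(r)}
\]
weakly on $M\setminus Cut^*(o)$, and the Bishop--Gromov comparison gives $A(r)\le C\,S(r)$ for the area of the geodesic sphere. I would use the capacity criterion: $M$ is parabolic iff $\mathrm{cap}(K)=0$ for every compact $K$. Taking $K=\overline{B_o(r_0)}$ and the radial cutoffs
\[
u_n(x) \;=\; \max\!\left\{0,\; 1 - \frac{\int_{r_0}^{r(x)} dt/S(t)}{\int_{r_0}^{R_n} dt/S(t)}\right\},
\]
a coarea computation bounds $\int_M|\nabla u_n|^2\,dV$ by $C\cdot\bigl(\int_{r_0}^{R_n} dt/S(t)\bigr)^{-1}$, which tends to $0$ as $R_n\to\infty$ by hypothesis. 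Hence $\mathrm{cap}(K)=0$, so $M$ is parabolic.

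For part (b), the sectional curvature lower bound yields, via Rauch/Hessian comparison, the inequality for $\Delta r$ in the direction needed to make
\[
\phi(x)\;=\;G(r(x))\;=\;\int_{r(x)}^{\infty}\frac{dt}{S(t)}
\]
a positive, non-constant superharmonic function on $M$: the computation
\[
\Delta\phi \;=\; G''(r) + G'(r)\,\Delta r \;=\; \frac{S'(r)}{S(r)^2} - \frac{\Delta r}{S(r)}
\]
combined with the comparison exhibits $\phi$ as the required superharmonic function, so $M$ is non-parabolic by definition. The main obstacle in both parts is controlling the cut locus: Laplacian and Hessian comparisons are smooth statements on $M\setminus Cut^*(o)$, but the distributional/weak formulation (Calabi's trick via barriers, or an approximation argument lifting the inequality across $Cut(o)$) is what is actually needed for the capacity estimate in (a) and for verifying $\Delta\phi\leq 0$ globally in (b). Once the comparisons are available in the distributional sense on all of $M$, the rest of the argument is the routine reduction outlined above.
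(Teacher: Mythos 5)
The paper does not actually prove Lemma \ref{lem1}; it is quoted from Ichihara (cf.\ also Grigor'yan's survey \cite{AG}), so your argument can only be judged on its own merits. Your part (a) is correct and is the standard argument: the radial Ricci lower bound gives, via the Riccati comparison (the ``variable-model'' Bishop--Gromov), the area bound $A(r)\le S(r)$ for geodesic spheres, and your radial cutoffs together with the coarea formula and $|\nabla r|\le 1$ give $\mathrm{cap}\big(\overline{B_o(r_0)}\big)\le C\big(\int_{r_0}^{R_n}dt/S(t)\big)^{-1}\to 0$, hence parabolicity by the standard capacity characterization. One remark: the cut locus is not really an obstacle in (a), since the capacity estimate uses only the area comparison (which only improves past the cut locus, as spheres lose directions there) and $|\nabla r|\le 1$ a.e.; no distributional Laplacian comparison is needed, so the ``main obstacle'' you flag is overstated for this half.

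Part (b) contains a genuine gap, located precisely where you write that the comparison gives ``the inequality for $\Delta r$ in the direction needed''. Hessian comparison converts a radial sectional curvature \emph{upper} bound $K_\omega\le-\psi''/\psi$ into the \emph{lower} bound $\Delta r\ge(d-1)\psi'/\psi=S'/S$, which is what makes $G(r(\cdot))$ superharmonic; the hypothesis as printed, $K_\omega\ge-\psi''/\psi$, gives the opposite inequality $\Delta r\le S'/S$, so your computation produces $\Delta\phi\ge0$ rather than $\le 0$. In fact (b) as printed is false: $M=\mathbb{R}^2$ with $\psi(r)=\sinh r$ satisfies $K_\omega=0\ge-1$ and $\int^\infty dr/S<\infty$, yet $\mathbb{R}^2$ is parabolic. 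The inequality must be read as $\le$ (as in Ichihara and \cite{AG}); the paper's only use of the lemma, Corollary \ref{cor1}, has equality and is insensitive to the misprint. Moreover, your proposed cure for the cut locus does not work in this direction: Calabi's barrier trick yields the distributional inequality only in the form $\Delta r\le\cdots$, while across $Cut(o)$ the distributional $\Delta r$ acquires a nonpositive singular part, so the needed lower bound genuinely fails there --- and the conclusion itself fails for general complete manifolds (flat $\mathbb{R}\times T^2$ has all radial curvatures $\le 0$, the Euclidean model $\mathbb{R}^3$ is non-parabolic, yet $\mathbb{R}\times T^2$ is parabolic). So a correct proof of (b) needs $Cut(o)=\emptyset$ or a substitute hypothesis; that is Ichihara's setting and is all the paper uses, since for the spherically symmetric $M_\sigma$ the pole condition holds and your explicit function, truncated as $\min\{G(r(x)),G(r_0)\}$ to avoid the singularity at $o$, is indeed a positive non-constant superharmonic function.
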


\begin{cor}\label{cor1} Let $M_\sigma$ be a  $d$-dimensional geodesically complete and non-compact spherically symmetric manifold. Then $M_\sigma$ is parabolic if and only if
$$\int_1^\infty\frac{dr}{\sigma^{d-1}(r)}=\infty.$$
\end{cor}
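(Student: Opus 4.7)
The plan is to derive this corollary as a direct application of Ichihara's Lemma (Lemma \ref{lem1}) by taking the auxiliary function $\psi$ in that lemma to be $\sigma$ itself. The hypotheses on $\psi$ (namely $\psi(0)=0$ and $\psi'(0)=1$) are precisely the existence conditions imposed on $\sigma$ in Section 2.2, so $\psi=\sigma$ is a legitimate choice. Also, $S(r)=\omega_{d-1}\sigma^{d-1}(r)$ is exactly $\mathrm{Area}(S_o(r))$ as computed in Section 2.2, so the integral $\int_1^\infty dr/S(r)$ differs from $\int_1^\infty dr/\sigma^{d-1}(r)$ only by the positive constant factor $\omega_{d-1}$ and the two integrals converge or diverge together.

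For the ``if'' direction, assume $\int_1^\infty dr/\sigma^{d-1}(r)=\infty$. By formula (\ref{sss}), the Ricci curvature of $M_\sigma$ in the radial direction is exactly $-(d-1)\sigma''(r)/\sigma(r)$ at every point off the pole, so the inequality $\mathrm{Ric}_o(x)\geq -(d-1)\sigma''(r)/\sigma(r)$ required by part $(a)$ of Lemma \ref{lem1} holds with equality. Combined with the divergence of the integral, part $(a)$ yields that $M_\sigma$ is parabolic.

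For the ``only if'' direction, I would prove the contrapositive: assume $\int_1^\infty dr/\sigma^{d-1}(r)<\infty$ and derive non-parabolicity. Again by (\ref{sss}), every sectional curvature containing the radial direction satisfies $K_\omega(x)=-\sigma''(r)/\sigma(r)$, so the inequality $K_\omega(x)\geq -\sigma''(r)/\sigma(r)$ needed for part $(b)$ of Lemma \ref{lem1} holds (with equality) for every $x\neq o$ and every admissible $\omega$. Thus part $(b)$ applies and delivers non-parabolicity of $M_\sigma$, completing the equivalence.

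There is no substantial obstacle here: both the Ricci curvature comparison and the sectional curvature comparison hold with equality on a spherically symmetric manifold, and the geodesic completeness together with non-compactness is already assumed. The only small bookkeeping step is to record that $Cut^*(o)=\{o\}$ for a manifold with pole, so the pointwise curvature hypotheses of Lemma \ref{lem1} are verified everywhere they are required; the remainder is a direct invocation of Ichihara's two statements.
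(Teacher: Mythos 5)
Your proposal is correct and takes exactly the route the paper intends: the paper's proof is simply the one-line remark that the corollary ``follows immediately from (\ref{sss}) and Lemma \ref{lem1},'' and you have unpacked precisely what that means by choosing $\psi=\sigma$ in Ichihara's lemma so that both curvature comparisons hold with equality, giving parabolicity from part $(a)$ when the integral diverges and non-parabolicity from part $(b)$ when it converges.
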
 
\begin{proof}
The conclusion follows immediately from (\ref{sss}) and Lemma \ref{lem1}.
\end{proof}

By Corollary \ref{cor1},  $T_{f}(r,L)\rightarrow\infty$ as $r\rightarrow\infty$ if $M_\sigma$ is parabolic.
\begin{theorem}\label{thm1}   
 Let $M_\sigma$ be a geodesically complete and non-compact spherically symmetric  K\"ahler  manifold of complex dimension $m$. Let 
  $L$ be a positive line bundle over a complex projective manifold $N$ with $\dim_{\mathbb C}N\leq m,$  and  $D\in|L|$  be  of  simple normal crossings.
 Let $f: M_\sigma\rightarrow N$ be a differentiably non-degenerate holomorphic map. Assume that $M_\sigma$ is parabolic. Then for every $\delta>0$  
  \begin{eqnarray*}
    &&  T_{f}(r,L)+T_{f}(r,K_N)+T(r,\mathscr R_{M_\sigma}) \\
     &\leq& \overline{N}_{f}(r,D)+O\big{(}\log^+ T_{f}(r,L)+\delta\log^+r\big{)}
  \end{eqnarray*}
holds for all $r\in(0,\infty)$ outside a set $E_\delta$ of finite Lebesgue measure.
\end{theorem}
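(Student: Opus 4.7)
The plan is to derive Theorem II directly from Theorem \ref{main theorem}(a) by using the parabolicity hypothesis to convert the error term $\delta \log^+\sigma(r)$ into $\delta \log^+ r$, up to enlarging the exceptional set by a further set of finite Lebesgue measure.

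Applying Theorem \ref{main theorem}(a) with $\gamma \equiv 1$ already yields, for every $\delta > 0$,
\[
T_f(r,L)+T_f(r,K_N)+T(r,\mathscr R_{M_\sigma}) \leq \overline{N}_f(r,D) + O\bigl(\log^+T_f(r,L)+\delta\log^+\sigma(r)\bigr)
\]
for $r \in (0,\infty)$ outside a set of finite Lebesgue measure. Theorem II thus reduces entirely to the growth comparison: under parabolicity, for every $\delta' > 0$ one should have $\delta\log^+\sigma(r) \leq \delta' \log^+ r + O(1)$ off a set of finite Lebesgue measure, after suitably adjusting the constant $\delta$.

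To set up this comparison, I would invoke Corollary \ref{cor1}, which identifies parabolicity of $M_\sigma$ with the divergence condition $\int_1^\infty dr/\sigma^{2m-1}(r) = \infty$. For each $\epsilon > 0$ consider the set
\[
A_\epsilon := \{\, r \geq 1 : \sigma^{2m-1}(r) > r^{1+\epsilon} \,\}.
\]
On $A_\epsilon$ one has $1/\sigma^{2m-1}(r) < 1/r^{1+\epsilon}$, so the weighted integral $\int_{A_\epsilon} dr/\sigma^{2m-1}(r) \leq \int_1^\infty r^{-(1+\epsilon)} dr < \infty$. Combined with the divergence $\int_1^\infty dr/\sigma^{2m-1}(r) = \infty$ and the smoothness of $\sigma$, one expects to conclude that $|A_\epsilon| < \infty$ via a careful decomposition argument: large-$\sigma$ "spikes" have controlled Lebesgue measure because the divergent integral must be sustained on $A_\epsilon^c$, where $\sigma^{2m-1}(r) \leq r^{1+\epsilon}$ and smoothness prevents pathological behavior. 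On $[1,\infty)\setminus A_\epsilon$ the pointwise bound $\log^+ \sigma(r) \leq \tfrac{1+\epsilon}{2m-1}\log^+ r$ holds, whence replacing $\delta$ in the output of Theorem \ref{main theorem}(a) by $\delta(2m-1)/(1+\epsilon)$ transforms $\delta\log^+\sigma(r)$ into $\delta\log^+ r$. Enlarging the exceptional set by $A_\epsilon$, which has finite Lebesgue measure, preserves the finite-measure condition on the exceptional set.

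The principal obstacle is the measure-theoretic step that $|A_\epsilon| < \infty$. The weighted finiteness $\int_{A_\epsilon} dr/\sigma^{2m-1}(r) < \infty$ does not by itself imply $|A_\epsilon| < \infty$, and abstract examples (functions oscillating between polynomial and superpolynomial values on large sets) show that parabolicity alone does not force pointwise polynomial control of $\sigma(r)$. The argument must therefore use, in an essential way, the smoothness of $\sigma$ on $(0,\infty)$ and probably the constraints imposed by the Kähler structure of $M_\sigma$---both of which restrict the allowable oscillatory behavior of $\sigma$ enough to secure $|A_\epsilon| < \infty$. Once this technical point is established, the remaining proof is routine bookkeeping of the exceptional sets inherited from Theorem \ref{main theorem}(a).
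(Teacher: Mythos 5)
Your proposal follows the paper's own proof almost exactly: the paper's entire argument is the single line that parabolicity gives $\int_1^\infty dr/\sigma^{2m-1}(r)=\infty$, ``which leads to $\log^+\sigma(r)\leq O(\log^+r)$,'' followed by an application of Theorem~\ref{main theorem}(a). You correctly identify that first implication as the crux, and in the end you correctly doubt it. That doubt is justified: divergence of $\int_1^\infty dr/\sigma^{2m-1}(r)$ does \emph{not} force polynomial control of $\sigma$, even off a set of finite Lebesgue measure and even for nondecreasing $\sigma$. For a concrete obstruction, take $\sigma^{2m-1}$ to be a smoothed nondecreasing step profile with $\sigma^{2m-1}\equiv b_n^{\,n}$ on $[b_n,b_{n+1}]$ and $b_{n+1}=b_n^{\,n}$ (with, say, $b_2=4$). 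Then $\int dr/\sigma^{2m-1}\geq\sum_n(1-b_n^{1-n})=\infty$, yet $\log\bigl(\sigma^{2m-1}(b_n)\bigr)/\log b_n=n\to\infty$, and the set $\{r:\sigma^{2m-1}(r)>r^{2}\}$ contains $\bigcup_n[b_n,b_n^{n/2})$, which has infinite Lebesgue measure. Such a profile is compatible with $\sigma(0)=0$, $\sigma'(0)=1$ and smoothness, and in complex dimension $m=1$ every Hermitian metric on $\mathbb C$ is automatically K\"ahler, so neither smoothness nor the K\"ahler hypothesis supplies the constraint you were hoping would rescue the step. In short, the measure-theoretic gap you flag is genuine, it is present in the paper's own proof, and as written the passage from Theorem~\ref{main theorem}(a) to Theorem~\ref{thm1} needs either a further hypothesis on $\sigma$ (e.g.\ polynomial growth) or a different argument.
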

\begin{proof} The completeness and non-compactness imply that $M_\sigma$ have radius $R=\infty.$ By Corollary \ref{cor1}, the parabolicity of $M_\sigma$ implies that 
$$\int_1^\infty\frac{dr}{\sigma^{2m-1}(r)}=\infty,$$
which leads to $\log^+\sigma(r)\leq O(\log^+r).$  Apply Theorem \ref{main theorem} (a), we prove the theorem. 
\end{proof}

\begin{cor}\label{}    
Let $M_\sigma$ be a geodesically complete and non-compact spherically symmetric  K\"ahler  manifold of complex dimension $m$.  Let 
  $L$ be a positive line bundle over a complex projective manifold $N$ with $\dim_{\mathbb C}N\leq m,$  and  $D\in|L|$  be of  simple normal crossings.
 Let $f: M_\sigma\rightarrow N$ be a differentiably non-degenerate holomorphic map. Assume that the Ricci curvature of  $M_\sigma$ as a Riemannian manifold is non-negative. Then for every $\delta>0$  
  \begin{eqnarray*}
      T_{f}(r,L)+T_{f}(r,K_N)
     &\leq& \overline{N}_{f}(r,D)+O\big{(}\log^+ T_{f}(r,L)+\delta\log^+r\big{)}
  \end{eqnarray*}
holds for all $r\in(0,\infty)$ outside a set $E_\delta$ of finite Lebesgue measure.
\end{cor}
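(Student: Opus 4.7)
The plan is to derive this corollary directly from Theorem \ref{main theorem}(a) by showing that under the hypothesis of non-negative Ricci curvature one can (i) replace $\delta\log^+\sigma(r)$ by $\delta\log^+r$ in the error and (ii) absorb the term $T(r,\mathscr R_{M_\sigma})$ into the $O$-term. I would not try to route through Theorem \ref{thm1}, since non-negative Ricci does not in general imply parabolicity (e.g.\ $\mathbb{C}^m$ with $m\geq 2$ is non-parabolic); rather one invokes Theorem \ref{main theorem}(a), which is available once geodesic completeness and non-compactness give $R=\infty$.

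For (i), I would use the identity $Ric_o(x)=-(d-1)\sigma''(r)/\sigma(r)$ from (\ref{sss}) with $d=2m$. Non-negativity of the Ricci tensor along $\partial/\partial r$ forces $\sigma''\le 0$, so $\sigma$ is concave on $(0,\infty)$. Combined with $\sigma(0)=0$, $\sigma'(0)=1$, and the fact that $\sigma>0$ on all of $(0,\infty)$, a short argument shows $\sigma'$ can never become negative: since concavity makes $\sigma'$ non-increasing, a negative value of $\sigma'$ at some point would drive $\sigma$ linearly to $-\infty$, contradicting $\sigma>0$. Hence $0\le\sigma'\le 1$ throughout, which gives $\sigma(r)\le r$ and therefore $\log^+\sigma(r)\le\log^+r$.

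For (ii), I would observe that $\mathrm{Ric}_{\mathbb R}\ge 0$ as a symmetric bilinear form implies that its trace, the scalar curvature $s_{M_\sigma}$, is also non-negative. Combined with the non-negativity of $g_r(o,x)$ from Lemma \ref{asdf}, formula (\ref{scalar}) shows that the $r$-dependent integral in $T(r,\mathscr R_{M_\sigma})$ is non-negative, while the subtracted integral is a fixed constant independent of $r$. Hence $T(r,\mathscr R_{M_\sigma})$ is bounded below by a constant, and moving it to the right-hand side of the inequality in Theorem \ref{main theorem}(a) contributes only an $O(1)$.

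Putting (i) and (ii) together in Theorem \ref{main theorem}(a) yields the corollary. I do not expect a serious obstacle; the only real content is the short concavity argument that $\sigma'\geq 0$ globally, which is what upgrades the error term from $\delta\log^+\sigma(r)$ to $\delta\log^+r$, and the trivial trace-positivity observation that delivers the sign of $s_{M_\sigma}$.
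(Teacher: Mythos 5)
Your proof is correct, and it in fact \emph{repairs} the paper's argument rather than merely paraphrasing it. The paper's own proof of this corollary is a single sentence: ``The non-negativity of Ricci curvature implies the parabolicity of $M_\sigma$, hence the conclusion holds by using Theorem \ref{thm1}.'' As you point out, that implication is false. Flat Euclidean space $\mathbb{R}^{2m}$ with $m\geq 2$ is Ricci-flat but non-parabolic; in the paper's own terms, $\sigma(r)=r$ gives $\int_1^\infty \sigma^{1-2m}(r)\,dr<\infty$, so by Corollary \ref{cor1} the manifold is non-parabolic. Thus the paper cannot legitimately pass through Theorem \ref{thm1}, and moreover even if it could, Theorem \ref{thm1} still carries the term $T(r,\mathscr R_{M_\sigma})$ on the left, whose removal the paper leaves unaddressed.

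Your route via Theorem \ref{main theorem}(a) handles both issues cleanly. Geodesic completeness and non-compactness give $R=\infty$, so part (a) applies. The concavity argument is exactly right: from \eqref{sss} with $d=2m$, non-negative radial Ricci forces $\sigma''\le 0$; together with $\sigma(0)=0$, $\sigma'(0)=1$, and positivity of $\sigma$ on $(0,\infty)$, the non-increasing derivative $\sigma'$ can never turn negative (else $\sigma$ would eventually be driven negative), so $0\le\sigma'\le 1$ and hence $\sigma(r)\le r$, giving $\delta\log^+\sigma(r)\le\delta\log^+r$. (Incidentally this reproduces, by a different mechanism, the same estimate on $\log^+\sigma$ that parabolicity would have given via Corollary \ref{cor1}.) And for the curvature term, non-negativity of $\mathrm{Ric}_{\mathbb R}$ forces non-negative scalar curvature, so by \eqref{scalar} and the non-negativity of the Green function from Lemma \ref{asdf}, $T(r,\mathscr R_{M_\sigma})$ is bounded below by the fixed constant $-\tfrac12\int_{B_o(r_0)}g_{r_0}(o,x)s_{M_\sigma}(x)\,dV(x)$, and so can be absorbed into $O(1)$ when moved to the right-hand side. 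Your proof is a genuine correction, not just an alternative.
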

\begin{proof} The non-negativity of  Ricci curvature  implies the parabolicity of $M_\sigma,$  hence the conclusion holds by using Theorem \ref{thm1}. 
\end{proof}
\textbf{Proof of Theorem \ref{main theorem}}

\begin{proof}
Write
 $D=\sum_{j=1}^qD_j$ as the union of irreducible components and equip every $L_{D_j}$ with a Hermitian
metric $h_j.$ Then it induces a natural Hermitian metric $h_L=h_1\otimes\cdots\otimes h_q$ on $L,$  
 which defines  a  volume form $\Omega:=c^n_1(L,h_L)$ on $N.$
Pick $s_j\in H^0(N, L_{D_j})$
with $D_j=(s_j)$ and $\|s_j\|<1.$
On $N$, one defines a singular volume form
$$
  \Phi=\frac{\Omega}{\prod_{j=1}^q\|s_j\|^2}.
$$Set
$$\xi\alpha^m=f^*\Phi\wedge\alpha^{m-n}, \ \ \alpha=\frac{\sqrt{-1}}{\pi}\sum_{i,j=1}^mh_{i\bar j}dz_i\wedge d\bar z_j.$$
Note that
$$\alpha^m=m!\det(h_{i\bar j})\bigwedge_{j=1}^m\frac{\sqrt{-1}}{\pi}dz_j\wedge d\bar z_j.$$
A direct computation leads to
$$dd^c\log\xi\geq f^*c_1(L, h_L)-f^*{\rm{Ric}}(\Omega)+\mathscr{R}_{M_\sigma}-{\rm{Supp}}f^*D$$
in the sense of currents, where $\mathscr{R}_{M_\sigma}=-dd^c\log\det(h_{i\bar j}).$
This follows that
\begin{eqnarray}\label{5q}
&& T(r, dd^c\log\xi) \\
&\geq& T_{f}(r,L)+T_{f}(r,K_N)+T(r,\mathscr{R}_{M_\sigma})-\overline{N}_{f}(r,D)+O(1). \nonumber
\end{eqnarray}

Next we give an upper bound of $ T(r, dd^c\log\xi).$ The simple normal crossing  property of $D$ implies that
there exist a finite  open covering $\{U_\lambda\}$ of $N$ and finitely many  rational functions
$w_{\lambda1},\cdots,w_{\lambda n}$ on $N$  such that $w_{\lambda1},\cdots, w_{\lambda n}$ are holomorphic on $U_\lambda$ for each $\lambda$ as well as
\begin{eqnarray*}
  dw_{\lambda1}\wedge\cdots\wedge dw_{\lambda n}(y)\neq0, & & \ ^\forall y\in U_{\lambda}, \\
  D\cap U_{\lambda}=\big{\{}w_{\lambda1}\cdots w_{\lambda h_\lambda}=0\big{\}}, && \ ^\exists h_{\lambda}\leq n.
\end{eqnarray*}
Indeed, we  can require  that $L_{D_j}|_{U_\lambda}\cong U_\lambda\times \mathbb C$ for 
$\lambda,j.$ On  $U_\lambda,$   write 
$$\Phi=\frac{e_\lambda}{|w_{\lambda1}|^2\cdots|w_{\lambda h_{\lambda}}|^2}
\bigwedge_{k=1}^n\frac{\sqrt{-1}}{2\pi}dw_{\lambda k}\wedge d\bar w_{\lambda k},$$
where  $e_\lambda$ is a  positive smooth function on $U_\lambda.$  
 Set 
$$\Phi_\lambda=\frac{\phi_\lambda e_\lambda}{|w_{\lambda1}|^2\cdots|w_{\lambda h_{\lambda}}|^2}
\bigwedge_{k=1}^n\frac{\sqrt{-1}}{2\pi}dw_{\lambda k}\wedge d\bar w_{\lambda k},$$
where  $\{\phi_\lambda\}$ is a partition of unity subordinate to $\{U_\lambda\}.$ 
Let $f_{\lambda k}=w_{\lambda k}\circ f$, then on  $f^{-1}(U_\lambda)$ 
\begin{eqnarray*}
 f^*\Phi_\lambda&=&
   \frac{\phi_{\lambda}\circ f\cdot e_\lambda\circ f}{|f_{\lambda1}|^2\cdots|f_{\lambda h_{\lambda}}|^2}
   \bigwedge_{k=1}^n\frac{\sqrt{-1}}{2\pi}df_{\lambda k}\wedge d\bar f_{\lambda k} \\
   &=& \phi_{\lambda}\circ f\cdot e_\lambda\circ f\sum_{1\leq i_1\not=\cdots\not= i_n\leq m}
   \frac{\Big|\frac{\partial f_{\lambda1}}{\partial z_{i_1}}\Big|^2}{|f_{\lambda 1}|^2}\cdots 
   \frac{\Big|\frac{\partial f_{\lambda h_\lambda}}{\partial z_{i_{h_\lambda}}}\Big|^2}{|f_{\lambda h_\lambda}|^2}
   \Big|\frac{\partial f_{\lambda (h_\lambda+1)}}{\partial z_{i_{h_\lambda+1}}}\Big|^2\cdots\Big|\frac{\partial f_{\lambda n}}{\partial z_{i_{n}}}\Big|^2 \\
   && \cdot\Big(\frac{\sqrt{-1}}{2\pi}\Big)^ndz_{i_1}\wedge d\bar z_{i_1}\wedge\cdots\wedge dz_{i_n}\wedge d\bar z_{i_n}.
\end{eqnarray*}
\ \ \ \ Fix any  $x_0\in M_\sigma,$ we may pick  a holomorphic coordinate system $z_1,\cdots,z_m$ near $x_0$ and  a holomorphic coordinate system $w_1,\cdots,w_m$ 
near $f(x_0)$ so that
$$\alpha=\frac{\sqrt{-1}}{2\pi}\sum_{j=1}^m dz_j\wedge d\bar{z}_j, \ \ \ c_1(L, h_L)|_{f(x_0)}=\frac{\sqrt{-1}}{2\pi}\sum_{j=1}^n dw_j\wedge d\bar{w}_j.
$$
 Set
$$f^*\Phi_\lambda\wedge\alpha^{m-n}=\xi_\lambda\alpha^m.$$
Then we have $\xi=\sum_\lambda \xi_\lambda$ and  
 \begin{eqnarray*}
\xi_\lambda|_{x_0} &=& \phi_{\lambda}\circ f\cdot e_\lambda\circ f\sum_{1\leq i_1\not=\cdots\not= i_n\leq m}
   \frac{\Big|\frac{\partial f_{\lambda1}}{\partial z_{i_1}}\Big|^2}{|f_{\lambda 1}|^2}\cdots 
   \frac{\Big|\frac{\partial f_{\lambda h_\lambda}}{\partial z_{i_{h_\lambda}}}\Big|^2}{|f_{\lambda h_\lambda}|^2}
   \Big|\frac{\partial f_{\lambda (h_\lambda+1)}}{\partial z_{i_{h_\lambda+1}}}\Big|^2\cdots\Big|\frac{\partial f_{\lambda n}}{\partial z_{i_{n}}}\Big|^2 \\
   &\leq&  \phi_{\lambda}\circ f\cdot e_\lambda\circ f\sum_{1\leq i_1\not=\cdots\not= i_n\leq m}
    \frac{\big\|\nabla_{M_\sigma}f_{\lambda1}\big\|^2}{|f_{\lambda 1}|^2}\cdots 
   \frac{\big\|\nabla_{M_\sigma}f_{\lambda h_\lambda}\big\|^2}{|f_{\lambda h_\lambda}|^2} \\
   &&\cdot \big\|\nabla_{M_\sigma}f_{\lambda(h_\lambda+1)}\big\|^2\cdots\big\|\nabla_{M_\sigma}f_{\lambda n}\big\|^2.
\end{eqnarray*}
Again, set
\begin{equation}\label{gtou}
  f^*c_1(L, h_L)\wedge\alpha^{m-1}=\varrho\alpha^m.
\end{equation}
Let $f_j=w_j\circ f$ for $1\leq j\leq n,$  then 
\begin{equation*}
    f^*c_1(L, h_L)\wedge\alpha^{m-1}|_{x_0}=\frac{(m-1)!}{2}\sum_{j=1}^m\big\|\nabla_{M_\sigma} f_j\big\|^2\alpha^m.
\end{equation*}
That is,
$$ \varrho|_{x_0}=(m-1)!\sum_{i=1}^n\sum_{j=1}^m\Big|\frac{\partial f_i}{\partial z_j}\Big|^2
=\frac{(m-1)!}{2}\sum_{j=1}^n\big\|\nabla_{M_\sigma} f_j\big\|^2.$$
Combine the above, we are led to 
$$\xi_\lambda\leq 
\frac{ \phi_{\lambda}\circ f\cdot e_\lambda\circ f\cdot(2\varrho)^{n-h_\lambda}}{(m-1)!^{n-h_\lambda}}\sum_{1\leq i_1\not=\cdots\not= i_n\leq m}
    \frac{\big\|\nabla_{M_\sigma}f_{\lambda1}\big\|^2}{|f_{\lambda 1}|^2}\cdots 
   \frac{\big\|\nabla_{M_\sigma}f_{\lambda h_\lambda}\big\|^2}{|f_{\lambda h_\lambda}|^2}
$$
on $f^{-1}(U_\lambda).$
Note that $\phi_\lambda\circ f\cdot e_\lambda\circ f$ is bounded on $M_\sigma,$ then it follows from $\log\xi\leq \sum_\lambda\log^+\xi_\lambda+O(1)$ that 
\begin{equation}\label{bbd}
   \log^+\xi\leq O\Big{(}\log^+\varrho+\sum_{k, \lambda}\log^+\frac{\|\nabla_{M_\sigma} f_{\lambda k}\|}{|f_{\lambda k}|}\Big{)}+O(1) 
 \end{equation}  on $M_\sigma.$ By Dynkin formula (see Section 3.1.2)
\begin{equation}\label{pfirst}
T(r,dd^c\log\xi)
=\frac{1}{2}\int_{S_o(r)}\log\xi(x)d\pi^r_o(x)+O(1).
\end{equation}
By  (\ref{bbd}) and (\ref{pfirst}) with Theorem \ref{LDL} 
\begin{eqnarray*}
    T(r, dd^c\log\xi) 
   &\leq& O\Big{(}\sum_{k,\lambda}m\Big{(}r,\frac{\|\nabla_{M_\sigma} f_{\lambda k}\|}{|f_{\lambda k}|}\Big{)}+\log^+\int_{S_o(r)}\varrho(x)d\pi^r_o(x)\Big)+O(1) \\
   &\leq& O\Big{(}\sum_{k,\lambda}\log^+ T(r,f_{\lambda k})+\log^+\int_{S_o(r)}\varrho(x)d\pi^r_o(x)\Big{)}+O(1)
   \\
   &\leq& O\Big{(}\log^+ T_{f}(r,L)+\log^+\int_{S_o(r)}\varrho(x)d\pi^r_o(x)\Big{)}+O(1).   
   \end{eqnarray*}
 Lemma \ref{CL} and (\ref{gtou}) imply that for every $\delta>0$
\begin{eqnarray*}
&&\log^+\int_{S_o(r)}\varrho(x)d\pi^r_o(x) \\
   &\leq& (1+\delta)^2\log^+T_{f}(r,L)+(2+\delta)\log^+\gamma(r)+(2m-1)\delta\log^+\sigma(r)
   \end{eqnarray*}
   holds for all $r\in(0,R)$ outside a set $E_\delta$ with $\int_{E_{\delta}}\gamma(r)dr<\infty.$
Thus, 
\begin{equation}\label{6q}
    T(r,dd^c\log\xi) \leq O\big{(}\log^+ T_{f}(r,L)+\log^+\gamma(r)+\delta\log^+\sigma(r)\big{)}+O(1)
\end{equation}
  for all $r\in(0,R)$ outside  $E_\delta$ with $\int_{E_{\delta}}\gamma(r)dr<\infty.$  Combining (\ref{5q}) with (\ref{6q}),  we prove the theorem.
\end{proof}

\subsection{Defect relations}~ 
 
Recall the definition of simple defect $\Theta_f(D)$ in Introduction.  
 \begin{theorem}\label{} Assume the same conditions as in Theorem $\ref{main theorem}.$   
 
$(a)$ For $R=\infty,$ if $T_{f}(r,L)\geq O(\log^+\sigma(r))$ as $r\rightarrow \infty,$ then 
$$\Theta_f(D)\leq \left[\frac{c_1(K^*_N)}{c_1(L)}\right]-\liminf_{r\rightarrow\infty}\frac{T(r,\mathscr R_{M_\sigma}) }{T_{f}(r,L)}.$$

$(b)$ For $R<\infty,$   if  $\log\frac{1}{R-r}=o(T_{f}(r,L))$ as $r\rightarrow R,$ then 
 $$\Theta_f(D)\leq \left[\frac{c_1(K^*_N)}{c_1(L)}\right]-\liminf_{r\rightarrow R}\frac{T(r,\mathscr R_{M_\sigma})}{T_{f}(r,L)}.$$
\end{theorem}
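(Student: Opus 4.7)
The plan is to derive the defect inequality directly from the Second Main Theorem (Theorem~\ref{main theorem}) by dividing through by $T_f(r,L)$ and taking $\liminf_{r\to R}$, following the classical Carlson--Griffiths--King defect argument.

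First I would rearrange the Second Main Theorem, in both cases $(a)$ and $(b)$, as
\begin{equation*}
T_f(r,L) - \overline{N}_f(r,D) \leq -T_f(r,K_N) - T(r,\mathscr{R}_{M_\sigma}) + O\bigl(\log^+ T_f(r,L) + \mathcal{E}(r)\bigr),
\end{equation*}
valid outside an exceptional set $E_\delta$, with $\mathcal{E}(r) = \delta\log^+\sigma(r)$ in case $(a)$ and $\mathcal{E}(r) = \log\frac{1}{R-r}$ in case $(b)$. Since $K_N\otimes K_N^*$ is trivial, $-T_f(r,K_N) = T_f(r,K_N^*)+O(1)$. The bracket notation then supplies the crucial bundle comparison: for any $t > [c_1(K_N^*)/c_1(L)]$, by definition there exist representatives $\eta\in c_1(L)$ and $\eta^*\in c_1(K_N^*)$ with $\eta^*<t\eta$ as $(1,1)$-forms on $N$. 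Pulling back by $f$, wedging with $\alpha^{m-1}$ (which preserves the sign of a $(1,1)$-form), and applying the integral transform $T(r,\cdot)$ yields
\begin{equation*}
T_f(r,K_N^*) \leq t\,T_f(r,L) + O(1).
\end{equation*}

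Substituting and dividing through by $T_f(r,L)$ gives, outside $E_\delta$,
\begin{equation*}
1 - \frac{\overline{N}_f(r,D)}{T_f(r,L)} \leq t - \frac{T(r,\mathscr{R}_{M_\sigma})}{T_f(r,L)} + \frac{O\bigl(\log^+ T_f(r,L) + \mathcal{E}(r)\bigr)}{T_f(r,L)}.
\end{equation*}
Since $\Theta_f(D) = \liminf_{r\to R}\bigl(1-\overline{N}_f(r,D)/T_f(r,L)\bigr)$, I would take $\liminf_{r\to R}$ of both sides. The growth hypotheses are tailored precisely to annihilate the error term: in $(a)$, $T_f(r,L)\geq O(\log^+\sigma(r))$ bounds $\delta\log^+\sigma(r)/T_f(r,L)$ by $O(\delta)$, while in $(b)$, $\log\frac{1}{R-r}=o(T_f(r,L))$ sends the corresponding ratio to zero. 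In both cases $T_f(r,L)\to\infty$ forces $\log^+T_f(r,L)/T_f(r,L)\to 0$. Using $\liminf(-X)=-\limsup X\leq -\liminf X$ on the remaining right-hand side, then letting $\delta\downarrow 0$ in case $(a)$, and finally letting $t\searrow [c_1(K_N^*)/c_1(L)]$, we obtain the stated defect inequality.

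The main technical obstacle is that the SMT holds only on $r\notin E_\delta$, whereas $\Theta_f(D)$ involves an unrestricted limit as $r\to R$. I would handle this in the standard Nevanlinna-theoretic manner: because $T_f(r,L)$, $\overline{N}_f(r,D)$, and $T(r,\mathscr{R}_{M_\sigma})$ are continuous in $r$ with $T_f$ and $\overline{N}_f$ non-decreasing, and $E_\delta$ has finite (respectively $(R-r)^{-1}dr$-finite) measure, the liminf through the complement of $E_\delta$ coincides with the unrestricted one. A minor bookkeeping point is to confirm that the growth hypotheses do force $T_f(r,L)\to\infty$, so that $\Theta_f(D)$ is meaningful and the $\log^+T_f(r,L)$ correction truly vanishes after division.
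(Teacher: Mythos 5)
Your argument is correct and is precisely the standard Carlson--Griffiths--King defect derivation that the paper leaves implicit (the paper's "proof" consists of a single sentence: the conclusions follow directly from Theorem~\ref{main theorem}). You correctly isolate the two ingredients that need filling in: the bundle comparison $T_f(r,K_N^*)\leq t\,T_f(r,L)+O(1)$ for $t>[c_1(K_N^*)/c_1(L)]$, and the passage from the almost-everywhere SMT inequality to the defect. One small remark on the second: the excursion through continuity and monotonicity is unnecessary, because the inequality goes the right way for free. Since $\Theta_f(D)=\liminf_{r\to R}\bigl(1-\overline{N}_f(r,D)/T_f(r,L)\bigr)$, and a $\liminf$ restricted to the complement of $E_\delta$ can only increase, one has $\Theta_f(D)\leq\liminf_{r\to R,\,r\notin E_\delta}\bigl(1-\overline{N}_f/T_f\bigr)$, and the SMT is available on exactly that set; so no matching of liminfs across $E_\delta$ is needed. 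You also correctly observe that the natural chain of inequalities actually gives a $\limsup$ in place of the paper's $\liminf$ on the Ricci term, i.e.\ a slightly stronger conclusion, which you then weaken to match the stated form. Your final caution that the growth hypotheses should force $T_f(r,L)\to\infty$ (so that $\Theta_f(D)$ is meaningful and $\log^+T_f/T_f\to0$) is a legitimate point; it is automatic in case $(b)$ from $\log\frac{1}{R-r}=o(T_f)$, and in case $(a)$ it is implicit in the paper's intended hypothesis, though the paper does not spell it out either.
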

 \begin{proof}
 The conclusions follow  directly from Theorem $\ref{main theorem}.$
 \end{proof}
\begin{cor}\label{defect} Assume the same conditions as in Theorem $\ref{main theorem},$  Suppose also that   $M_\sigma$ has non-negative scalar curvature. 

$(a)$ For $R=\infty,$ if $T_{f}(r,L)\geq O(\log^+\sigma(r))$ as $r\rightarrow \infty,$ then 
$$\Theta_f(D)\leq \left[\frac{c_1(K^*_N)}{c_1(L)}\right].$$

$(b)$ For $R<\infty,$   if  $\log(R-r)=o(T_{f}(r,L))$ as $r\rightarrow R,$ then 
 $$\Theta_f(D)\leq \left[\frac{c_1(K^*_N)}{c_1(L)}\right].$$
\end{cor}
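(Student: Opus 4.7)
The plan is to deduce Corollary \ref{defect} directly from the unnamed defect theorem that immediately precedes it, by showing that the extra curvature term on the right-hand side is non-negative under the scalar curvature hypothesis.

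First I would write down the conclusion of the preceding theorem, namely
\[
\Theta_f(D) \leq \left[\frac{c_1(K^*_N)}{c_1(L)}\right] - \liminf_{r \to R}\frac{T(r,\mathscr R_{M_\sigma})}{T_f(r,L)},
\]
valid in both cases $(a)$ and $(b)$ under the growth hypotheses on $T_f(r,L)$. Since the bound claimed in Corollary \ref{defect} is obtained simply by dropping the liminf term, it suffices to prove that this liminf is non-negative.

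For the main step I would invoke the formula (\ref{scalar}),
\[
T(r,\mathscr R_{M_\sigma}) = \frac{1}{2}\int_{B_o(r)} g_r(o,x) s_{M_\sigma}(x)\, dV(x) - \frac{1}{2}\int_{B_o(r_0)} g_{r_0}(o,x) s_{M_\sigma}(x)\, dV(x).
\]
Under the hypothesis $s_{M_\sigma} \geq 0$ and using the non-negativity of the Green function $g_r(o,\cdot) \geq 0$ on $B_o(r)$ (from Lemma \ref{asdf}), the first integral is non-negative, while the second integral is a fixed constant independent of $r$. Hence $T(r,\mathscr R_{M_\sigma}) \geq -C$ for some constant $C > 0$.

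Finally I would argue that in both cases the growth hypothesis forces $T_f(r,L) \to \infty$ as $r \to R$: in case $(a)$ this is evident since $\sigma(r) \to \infty$, and in case $(b)$ it follows from $\log(R-r) = o(T_f(r,L))$. Consequently
\[
\liminf_{r \to R}\frac{T(r,\mathscr R_{M_\sigma})}{T_f(r,L)} \geq \liminf_{r \to R}\frac{-C}{T_f(r,L)} = 0,
\]
which yields the desired bound. The only real subtlety is verifying that the chosen liminf is genuinely non-negative rather than merely bounded below, but this is immediate once one observes that the ratio of a lower-bounded quantity to a diverging positive one has non-negative liminf; I do not expect any further obstacle.
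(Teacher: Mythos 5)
Your overall strategy—deduce the corollary from the preceding (unnumbered) theorem by showing the $\liminf$ term is non-negative—is the same as the paper's. However, your execution introduces a genuine gap. You only establish $T(r,\mathscr R_{M_\sigma}) \geq -C$, which forces you to also argue $T_f(r,L)\to\infty$, and your justification of this in case $(a)$ is incorrect: you claim it is ``evident since $\sigma(r)\to\infty$,'' but spherically symmetric manifolds with $R=\infty$ only require $\sigma$ to be a smooth positive function on $(0,\infty)$ with $\sigma(0)=0$, $\sigma'(0)=1$ (e.g.\ $\sigma(r)=\tanh r$ stays bounded). If $\sigma$ is bounded, $\log^+\sigma(r)$ is bounded, the growth hypothesis $T_f(r,L)\geq O(\log^+\sigma(r))$ is vacuous, and $T_f(r,L)$ need not diverge at all (indeed $T_f(r,L)\to\infty$ is essentially equivalent to $\int^\infty\sigma^{1-2m}\,dt=\infty$, i.e.\ parabolicity, which is not assumed in Corollary~\ref{defect}). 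So your argument in case $(a)$ does not close.

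The paper's own proof avoids this entirely by observing that non-negative scalar curvature gives the stronger pointwise conclusion $T(r,\mathscr R_{M_\sigma})\geq 0$ for all $r\geq r_0$, not merely $\geq -C$. This follows from the first expression in \eqref{scalar}: since $\mathscr R_{M_\sigma}\wedge\alpha^{m-1}$ is a positive constant multiple of $s_{M_\sigma}\,\alpha^m$, the integrand in $T(r,\mathscr R_{M_\sigma})=\int_{r_0}^{r}\sigma^{1-2m}(t)\,dt\int_{B_o(t)}\mathscr R_{M_\sigma}\wedge\alpha^{m-1}$ is non-negative. (Equivalently, in the Green-function form one uses that $g_r(o,\cdot)\geq g_{r_0}(o,\cdot)\geq 0$ on $B_o(r_0)$, so the second integral is dominated by the first.) With $T(r,\mathscr R_{M_\sigma})\geq 0$ in hand, $\liminf_{r\to R} T(r,\mathscr R_{M_\sigma})/T_f(r,L)\geq 0$ is immediate and no divergence of $T_f(r,L)$ is needed. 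I recommend you replace your bound $\geq -C$ by the sharp bound $\geq 0$, at which point your case $(a)$ gap disappears.
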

 \begin{proof}
 The non-negativity of scalar curvature of $M_\sigma$ gives that $T(r,\mathscr R_{M_\sigma})\geq0,$ see (\ref{scalar}).
\end{proof}

\vskip\baselineskip

\label{lastpage-01}
\end{document}